\newcommand\numberthis{\addtocounter{equation}{1}\tag{\theequation}}
\newtheorem{theorem}{Theorem}
\newtheorem{assumption}{Assumption}
\newtheorem{corollary}[theorem]{Corollary}
\newtheorem{lemma}[theorem]{Lemma}
\begin{document}

\title{Distributed Online Optimization in Dynamic Environments Using Mirror Descent}



\author{Shahin Shahrampour and Ali Jadbabaie 
\thanks{The authors gratefully acknowledge the support of ONR BRC Program on Decentralized, Online Optimization.}
\thanks{Shahin Shahrampour 
is with the Department of Electrical Engineering at Harvard University, Cambridge, MA 02138 USA. (e-mail: {\tt shahin@seas.harvard.edu}).}
\thanks{Ali Jadbabaie is with the Institute for Data, Systems, and Society at Massachusetts Institute of Technology, Cambridge, MA 02139 USA. (email: {\tt jadbabai@mit.edu}).}
}

\maketitle

\begin{abstract}
This work addresses decentralized online optimization in non-stationary environments. A network of agents aim to track the minimizer of a global time-varying convex function. The minimizer evolves according to a known dynamics corrupted by an unknown, unstructured noise. At each time, the global function can be cast as a sum of a finite number of local functions, each of which is assigned to one agent in the network. Moreover, the local functions become available to agents sequentially, and agents do not have a prior knowledge of the future cost functions. Therefore, agents must communicate with each other to build an online approximation of the global function. 
We propose a decentralized variation of the celebrated Mirror Descent, developed by Nemirovksi and Yudin. Using the notion of Bregman divergence in lieu of Euclidean distance for projection, Mirror Descent has been shown to be a powerful tool in large-scale optimization. Our algorithm builds on Mirror Descent, while ensuring that agents perform a consensus step to follow the global function and take into account the dynamics of the global minimizer. To measure the performance of the proposed online algorithm, we compare it to its offline counterpart, where the global functions are available a priori. The gap between the two is called dynamic regret. We establish a regret bound that scales inversely in the spectral gap of the network, and more notably it represents the deviation of minimizer sequence with respect to the given dynamics. We then show that our results subsume a number of results in distributed optimization. 
We demonstrate the application of our method to decentralized tracking of dynamic parameters and verify the results via numerical experiments. 
\end{abstract}




\section{Introduction}
Distributed convex optimization has received a great deal of interest in science and engineering. Classical engineering problems such as decentralized tracking, estimation, and detection are optimization problems in essence\cite{li2002detection,rabbat2004distributed,xiao2007distributed,lesser2012distributed,shahinjournal,nedic2015fast,qipeng2015distributed}, and early studies on parallel and distributed computation dates back to three decades ago with seminal works of  \cite{tsitsiklis1984problems,tsitsiklis1984distributed,bertsekas1989parallel}. In any decentralized scheme, the objective is to perform a {\it global} task, assigned to a number of agents in a network. Each individual agent has limited resources or partial information about the task. As a result, agents engage in {\it local} interactions to complement their insufficient knowledge and accomplish the global task. The use of decentralized techniques has increased rapidly since they impose low computational burden on agents and are robust to node failures as opposed to centralized algorithms which heavily rely on a single information processing unit. 

In distributed optimization, the main task is often minimization of a global convex function, written as the sum of local convex functions, where each agent holds a private copy of one specific local function. Then, based on a communication protocol, agents exchange local gradients to minimize the global cost function.

Decentralized optimization is a mature discipline in addressing problems dealing with {\it time-invariant} cost functions\cite{nedic2009distributed,johansson2009randomized,ram2010distributed,duchi2012dual,zhu2012distributed,jakovetic2014fast,shi2015extra}. However, in many real-world applications, cost functions vary over time. Consider, for instance, the problem of tracking a moving target, where the goal is to follow the position, velocity, and acceleration of the target. One should tackle the problem by minimizing a loss function defined with respect to these parameters; however, since they are {\it time-variant}, the cost function becomes {\it dynamic}. 

When the problem framework is dynamic in nature, there are two key challenges one needs to consider:
\begin{itemize}
\item[1)] Agents often observe the local cost functions in an {\it online} or {\it sequential} fashion, i.e., the local functions are revealed only after they make their instantaneous decision at each round, and they are unaware of future cost functions. In the last ten years, this problem (in the centralized domain) has been the main focus of the online optimization field in the machine learning community \cite{shalev2011online}.  
\item[2)] Any online algorithm should mimic the performance of its offline counterpart, and the gap between the two is called {\it regret}. The most stringent benchmark is an offline problem that aims to track the minimizer of the global cost function over time, which brings forward the notion of {\it dynamic} regret \cite{zinkevich2003online}. It is well-known that this benchmark makes the problem intractable in the worst-case. However, as studied in the centralized online optimization \cite{zinkevich2003online,hall2015online,besbes2015non,jadbabaie2015online}, the hardness of the problem can be characterized via a complexly measure that captures the variation in the minimizer sequence.
\end{itemize}

In this paper, we aim to address the above directions simultaneously. We consider an online optimization problem, where the global cost is realized sequentially, and the objective is to track the minimizer of the function. The dynamics of the minimizer is common knowledge, but the time-varying  minimizer sequence can deviate from this dynamics due to an {\it unstructured} noise. At each time step, the global function can be cast as sum of local functions, each of which is associated to one agent. Therefore, agents need to exchange information to solve the global problem.

Our multi-agent tracking setup is reminiscent of a distributed Kalman \cite{olfati2007distributed}. However, there are fundamental distinctions in our approach: {\it (i)} We do not assume that the minimizer sequence is corrupted with a Gaussian noise. Nor do we assume that this noise has a statistical distribution. Instead, we consider an adversarial-noise model with {\it unknown} structure. {\it (ii)} Agents observations are not necessarily linear; in fact, the observations are local gradients being potentially non-linear. Furthermore, our focus is on the {\it finite-horizon} analysis rather than asymptotic results. 

We also note that our setup differs from the distributed particle filtering\cite{gu2007distributed} as it is {\it online}, and agents receive only one observation per iteration. Moreover, we reiterate that the noise does not have a certain statistical distribution.

For this setup, we propose a decentralized version of the well-known Mirror Descent\footnote{Algorithms relying on Gradient Descent minimize Euclidean distance in the projection step. Mirror Descent generalizes the projection step using the concept of Bregman divergence \cite{yudin1983problem,beck2003mirror}. Euclidean distance is a special Bregman divergence that reduces Mirror Descent to Gradient Descent. Kullback-Leibler divergence is another well-known type of Bregman divergence (see e.g.   \cite{bauschke2001joint} for more details on Bregman divergence).}, developed by Nemirovksi and Yudin \cite{yudin1983problem}. Using the notion of Bregman divergence in lieu of Euclidean distance for projection, Mirror Descent has been shown to be a powerful tool in large-scale optimization. Our algorithm consists of three interleaved updates: {\it (i)} each agent follows the local gradient while staying close to previous estimates in the local neighborhood; {\it (ii)} agents take into account the dynamics of the minimizer sequence; {\it (iii)} agents average their estimates in their local neighborhood in a consensus step.

 Motivated by centralized online optimization, we use the notion of {\it dynamic} regret to characterize the difference between our online decentralized algorithm and its offline centralized version. We establish a regret bound that scales inversely in the spectral gap of the network, and more notably it represents the deviation of minimizer sequence with respect to the given dynamics. That is, it highlights the impact of the arbitrary noise driving the dynamical model of the minimizer. We further consider stochastic optimization, where agents observe only noisy versions of their local gradients, and we prove that in this case, our regret bound holds true in the expectation sense. 
 
Our main theoretical contribution is providing a comprehensive analysis on networked online optimization in dynamic setting. Our results subsume two important classes of decentralized optimization in the literature: {\it (i)} decentralized optimization of {\it time-invariant} objectives, and {\it (ii)} decentralized optimization of time-variant objectives over {\it fixed} variables. This generalization is an artifact of allowing dynamics in both objective and variable.   

We finally show that our algorithm is applicable to decentralized tracking of dynamic parameters. In fact, we show that the problem can be posed as the minimization of the square loss using Euclidean distance as the Bregman divergence. We then empirically verify that the tracking quality depends on how well the parameter follows its given dynamics.

\subsection{Related Literature}
This work is related to two distinct bodies of literature: {\it (i)} decentralized optimization, and {\it (ii)} online optimization in dynamic environments. Our goal in this work is to bridge the two and provide a general framework for decentralized {\it online} optimization in {\it non-stationary} environments. Below, we provide an overview of the related works to both scenarios:

\noindent
{\bf Decentralized Optimization:} There are a host of results in the literature on decentralized optimization for time-invariant functions. The seminal work of \cite{nedic2009distributed} studies distributed subgradient methods over time-varying networks and provides convergence analysis. The effect of stochastic gradients is then considered in \cite{ram2010distributed}. Shi et al. \cite{shi2015extra} prove fast convergence rates for Lipschitz-differentiable objectives by adding a correction term to the decentralized gradient descent algorithm. Of particular relevance to this work is \cite{li2016distributed}, where decentralized mirror descent has been developed for when agents receive the gradients with a delay. More recently, the application of mirror descent to saddle point problems is studied in \cite{li2016distributed2}. Moreover, Rabbat in \cite{rabbat2015multi} proposes a decentralized mirror descent for stochastic composite optimization problems and provide guarantees for strongly convex regularizers. In \cite{raginsky2012continuous}, Raginsky and Bouvrie investigate distributed stochastic mirror descent in the continuous-time domain. On the other hand, Duchi et al. \cite{duchi2012dual} study dual averaging for distributed optimization, and provide a comprehensive analysis on the impact of network parameters on the problem. The extension of dual averaging to online distributed optimization is considered in \cite{hosseini2013online}. Mateos-N{\'u}nez and Cort{\'e}s \cite{mateos2014distributed} consider online optimization using subgradient descent of local functions, where the graph structure is time-varying. In \cite{nedic2015decentralized}, a decentralized variant of Nesterov's primal-dual algorithm is proposed for online optimization. Finally, in \cite{7353155}, distributed online optimization is studied for strongly convex objective functions over time-varying networks.

 \noindent
{\bf Online Optimization in Dynamic Environments:} In online optimization, the benchmark can be defined abstractly in terms of a time-varying sequence, a particular case of which is the minimizer  sequence of a time-varying cost function. Several versions of the problem have been studied in the literature of machine learning in the centralized case. In \cite{zinkevich2003online}, Zinkevich develops the celebrated online gradient descent and considers its extension to time-varying sequences. The authors of \cite{hall2015online} generalize this idea to study time-varying sequences following given dynamics. Besbes et al. \cite{besbes2015non} restrict their attention to minima sequence and introduce a complexity measure for the problem in terms of variation in cost functions. For the same problem, the authors of \cite{jadbabaie2015online} develop an adaptive algorithm whose regret bound is expressed in terms of the variation of both functions and minima sequence, while in \cite{mokhtari2016online} an improved rate is derived for strongly convex objectives. Moreover, online dynamic optimization with linear objectives is discussed in \cite{lee2015resisting}. Fazlyab et al.  \cite{mahyar} consider interior point methods and provide continuous-time analysis for the problem. Finally, Yang et al. \cite{yang2016tracking} provide optimal bounds for when the minimizer belongs to the feasible set.

\subsection{Organization}
The paper is organized as follows. The notation, problem formulation, assumptions, and algorithm are described in Section \ref{prelim}. In Section \ref{theory}, we provide our theoretical results characterizing the behavior of the dynamic regret. Section \ref{application} is dedicated to application of our method to decentralized tracking of dynamic parameters. Section \ref{conclusion} concludes, and the proofs are given in Section \ref{appendix} (Appendix).


\section{Problem Formulation and Algorithm}\label{prelim}
\noindent
{\bf Notation:} We use the following notation in the exposition of our results:
\begin{center}
  \begin{tabular}{| c || l | }
    \hline
     $\vphantom{\sum^N} [n]$ &  The set $\{1,2,...,n\}$ for any integer $n$ \\ \hline 
     $\vphantom{\sum^N} x^\top$ & Transpose of the vector $x$ \\ \hline
     $\vphantom{\sum^N} x(k)$ & The $k$-th element of vector $x$ \\ \hline
     $\vphantom{\sum^N} I_n$ &  Identity matrix of size $n$ \\ \hline
     $\vphantom{\sum^N} \Delta_d$ &  The $d$-dimensional probability simplex \\ \hline
     $\vphantom{\sum^N} \inn{\cdot, \cdot}$ &  Standard inner product operator \\ \hline
     $\vphantom{\sum^N} \e{\cdot}$ &  Expectation operator \\ \hline
     $\vphantom{\sum^N} \norm{\cdot}_p$ &  $p$-norm operator \\ \hline
     $\vphantom{\sum^N} \norm{\cdot}_*$ &  The dual norm of $\norm{\cdot}$ \\ \hline
     $\vphantom{\sum^N} \lambda_i(W)$ &  The $i$-th largest eigenvalue of matrix $W$ \\ \hline
     $\vphantom{\sum^N} \sigma_i(W)$ &  The $i$-th largest singular of matrix $W$ \\ \hline
  \end{tabular}
\end{center}
Throughout the paper, all the vectors are in column format.
\subsection{Decentralized Optimization in Dynamic Environments}
In this work, we consider an optimization problem involving a {\it global} convex function. We let $\X$ be a convex set and represent the global function by $f_t: \X \rightarrow \real$ at time $t$. The global function is time-variant, and the goal is to track the minimizer of $f_t(\cdot)$, denoted by $x^\star_t$. We address a finite-time problem whose {\it offline} and {\it centralized} version can be posed as follows 
\begin{equation}\label{problem}
\begin{aligned}
& \underset{x_1,\ldots,x_T}{\text{minimize}}
& & \sum_{t=1}^T f_t(x_t) \\
& \text{subject to}
& & x_t \in \X, \; t \in [T].
\end{aligned}
\end{equation}
However, we want to solve the problem in an {\it online} and {\it decentralized} fashion. In particular, the global function at each time $t$ can be written as the sum of $n$ {\it local} functions as
\begin{align}\label{decomposition}
f_t(x):=\frac{1}{n}\sum_{i=1}^nf_{i,t}(x),
\end{align}
 where $f_{i,t}: \X \rightarrow \real$ is a local convex function on $\X$ for all $i \in [n]$. We have a network of $n$ agents facing two challenges in solving problem \eqref{problem}: {\it (i)} agent $j\in [n]$ receives private information only about $f_{j,t}(\cdot)$ and does not have access to the global function $f_t(\cdot)$, which is common to decentralized schemes; {\it (ii)} The functions are revealed to agents sequentially along the time horizon, i.e., at any time instance $s$, agent $j$ has observed $f_{j,t}(\cdot)$ for $t<s$, whereas the agent does not know $f_{j,t}(\cdot)$ for $s \leq t\leq T$, which is common to online settings. 

The agents can exchange information with one another, and their relationship is encoded via an undirected graph $\mathcal{G}=(\mathcal{V}, \mathcal{E})$, where $\mathcal{V}=[n]$ denotes the set of nodes (agents), and $\mathcal{E}$ is the set of edges (links between agents). Each agent $i$ assigns a positive weight $[W]_{ij}$ for the information received from agent $j \neq i$. Hence, the set of neighbors of agent $i$ is defined as $\mathcal{N}_i:=\{j : [W]_{ij}>0\}$.

Note that our framework subsumes two important classes of decentralized optimization in the literature:
\begin{itemize}
\item[1)] Existing methods often consider {\it time-invariant} objectives (see e.g. \cite{nedic2009distributed,duchi2012dual,li2016distributed}). This is simply the special case where $f_t(x)=f(x)$ and $x_t=x$ in \eqref{problem}.
\item[2)] Online algorithms deal with {\it time-varying} functions, but often the network's objective is to minimize the temporal average of $\{f_t(x)\}_{t=1}^T$ over a fixed variable $x$ (see e.g. \cite{hosseini2013online,mateos2014distributed}). This can be captured by our setup when $x_t=x$ in \eqref{problem}.
\end{itemize}
To exhibit the online nature of the problem, the latter class in above is usually reformulated by a popular performance metric called {\it regret}. Since in that setup $x_t=x$ for $t\in [T]$, denoting by $x^\star:=\argmin_{x \in \X}\sum_{t=1}^Tf_t(x)$, the solution to problem \eqref{problem} becomes $\sum_{t=1}^T f_t(x^\star)$. Then, the goal of online algorithm is to mimic its offline version by minimizing the regret defined as follows
\begin{align}\label{staticregret}
\textbf{\textit{Reg}}^s_T=\frac{1}{n}\sum_{i=1}^n \sum_{t=1}^T f_t(x_{i,t}) - \sum_{t=1}^T f_t(x^\star),
\end{align}
where $x_{i,t}$ is the estimate of agent $i$ for $x^\star$ at time $t$. Moreover, the superscript ``s" reiterates the fact that the benchmark is minimum of the sum $\sum_{t=1}^T f_t(x)$ over a {\it static} or {\it fixed} comparator variable $x$ that resides in the set $\X$. In this setup, a successful algorithm incurs a sub-linear regret, which asymptotically closes the gap between the online algorithm and the  offline algorithm (when normalized by $T$).   

On the contrary, the focal point of this paper is to study the scenario where {\it functions} and {\it comparator variables} evolve simultaneously, i.e., the variables $\{x_t\}_{t=1}^T$ are not constrained to be fixed in \eqref{problem}. Let $x^\star_t:=\argmin_{x \in \X}f_t(x)$ be the minimizer of the global function at time $t$. Then, the solution to problem \eqref{problem} is simply $\sum_{t=1}^T f_t(x^\star_t)$. Therefore, to capture the online nature of problem \eqref{problem}, we reformulate it using the notion of {\it dynamic} regret as 
\begin{align}\label{regret}
\textbf{\textit{Reg}}^d_T=\frac{1}{n}\sum_{i=1}^n \sum_{t=1}^T f_t(x_{i,t}) -  \sum_{t=1}^T f_t(x^\star_t),
\end{align}
where $x_{i,t}$ is the estimate of agent $i$ for $x_t^\star$ at time $t$. The goal is to minimize the dynamic regret measuring the gap between the online algorithm and its offline version. The superscript ``d" indicates that the benchmark is the sum of minima $\sum_{t=1}^T f_t(x^\star_t)$ characterized by dynamic variables $\{x^\star_t\}_{t=1}^T$ that lie in the set $\X$.

It is well-known that the more stringent benchmark in the dynamic setup makes the problem intractable in the worst-case, i.e., achieving a sub-linear regret could be impossible. However, as studied in the centralized online optimization \cite{jadbabaie2015online,hall2015online,besbes2015non}, we would like to characterize the hardness of the problem via a complexly measure that captures the pattern of the minimizer sequence $\{x^\star_t\}_{t=1}^T$. More specifically, assuming that a dynamics $A$ is a {\it common} knowledge in the network, and 
\begin{align}\label{dynamics}
x^\star_{t+1}=Ax^\star_{t}+v_t,
\end{align}
we want to prove a regret bound in terms of 
\begin{align}\label{CT}
C_T:=\sum_{t=1}^T\norm{x^\star_{t+1}-Ax^\star_{t}}=\sum_{t=1}^T\norm{v_t},
\end{align}
which represents the deviation of minimizer sequence with respect to dynamics $A$. Note that generalizing the results to the time-variant case is straightforward, i.e., when $A$ is replaced by $A_t$ in \eqref{dynamics}. 

The problem setup \eqref{problem} coupled with the dynamics given in \eqref{dynamics} is reminiscent of distributed Kalman filtering\cite{olfati2007distributed}. However, there are fundamental distinctions here: {\it (i)} The mismatch noise $v_t$ is neither Gaussian nor of known statistical distribution. It can be thought as an adversarial noise with {\it unknown} structure, which represents the deviation from the dynamics\footnote{In online learning, the focus is not on distribution of data. Instead, data is thought to be generated arbitrarily, and its effect is observed through the loss functions\cite{shalev2011online}.}. {\it (ii)} Agents observations are not necessarily linear; in fact, the observations are local gradients of $\{f_{i,t}(\cdot)\}_{t=1}^T$ and are non-linear when the objective is not quadratic. Furthermore, another implicit distinction in this work is our focus on {\it finite-time} analysis rather than asymptotic results. 

We note that our framework also differs from distributed particle filtering\cite{gu2007distributed} since agents receive only one observation per iteration, and the mismatch noise $v_t$ has no structure or distribution. 

Having that in mind, to solve the online consensus optimization \eqref{regret}, we propose to decentralize the Mirror Descent algorithm \cite{yudin1983problem} and to analyze it in a dynamic framework. The appealing feature of Mirror Descent is extension of the projection step using Bregman divergence in lieu of Euclidean distance, which makes the algorithm applicable to a wide range of problems. Before defining Bregman divergence and elaborating the algorithm, we start by stating a couple of standard assumptions in the context of decentralized optimization.

\begin{assumption}\label{A1}
For any $i\in [n]$, the function $f_{i,t}(\cdot)$ is Lipschitz continuous on $\X$ with a uniform constant $L$. That is, $$|f_{i,t}(x)-f_{i,t}(y)|\leq L\norm{x-y},$$ for any $x,y \in \X$. This further implies that the gradient of $f_{i,t}(\cdot)$ denoted by $\nabla f_{i,t}(\cdot)$ is uniformly bounded on $\X$ by the constant $L$, i.e., we have $\norm{\nabla f_{i,t}(\cdot)}_* \leq L$.\footnote{This relationship is standard, see e.g. Lemma 2.6. in \cite{shalev2011online} for more details.}
\end{assumption}

\begin{assumption}\label{A2}
The network is connected, i.e., there exists a path from any agent $i\in [n]$ to any agent $j\in [n]$. Also, the matrix $W$ is doubly stochastic\footnote{For the sake of simplicity, we assume that the topology is time-invariant, and $W$ is fixed. The extension of problem to time-varying topology is straightforward, as previously investigated in the literature (see e.g. \cite{nedic2009distributed,duchi2012dual,li2016distributed}).} with positive diagonal. That is,
$$
\sum_{i=1}^n[W]_{ij}=\sum_{j=1}^n[W]_{ij}=1.
$$
\end{assumption}
The connectivity constraint in Assumption \ref{A2} guarantees the information flow in the
network. It simply implies uniqueness of  $\lambda_1(W)=1$ and warrants that other eigenvalues of $W$ are strictly less than one in magnitude \cite{horn2012matrix}.
\subsection{Decentralized Online Mirror Descent}
The development of Mirror Descent relies on the Bregman divergence outlined in this section. Consider a convex set $\X$ in a Banach space $\B$, and let $\R : \B \rightarrow \real$ denote a 1-strongly convex function on $\X$ with respect to a norm $\norm{\cdot}$. That is,
$$
\R(x)\geq \R(y)-\inn{\nabla\R(y),x-y}+\frac{1}{2}\norm{x-y}^2.
$$
for any $x,y \in \X$. Then, the Bregman divergence $\dr(\cdot,\cdot)$ with respect to the function $\R(\cdot)$ is defined as follows:
$$
\dr(x,y):=\R(x)-\R(y)-\inn{x-y,\nabla\R(y)}.
$$ 
Combining the two identities above yields an important property of the Bregman divergence, and for any $x,y \in \X$ we get 
\begin{align}\label{bregcond}
\dr(x,y) \geq \frac{1}{2}\norm{x-y}^2,
\end{align}
due to the strong convexity of $\R(\cdot)$. Two famous examples of Bregman divergence are the Euclidean distance and the Kullback-Leibler (KL) divergence generated from $\R(x)=\frac{1}{2}\norm{x}^2_2$ and $\R(x)=\sum_{i=1}^dx(i)\log x(i)-x(i)$, respectively.

\begin{assumption}\label{A3}
Let $x$ and $\{y_i\}_{i=1}^n$ be vectors in $\real^d$. The Bregman divergence satisfies the separate convexity in the following sense 
$$\dr(x,\sum_{i=1}^n\alpha(i)y_i) \leq \sum_{i=1}^n\alpha(i)\dr(x,y_i),$$
where $\alpha\in\Delta_n$ is on the $n$-dimensional simplex.
\end{assumption}
The assumption is satisfied for commonly used cases of Bregman divergence. For instance, the Euclidean distance evidently respects the condition. The KL-divergence also satisfies the constraint,  and we refer the reader to Theorem 6.4. in \cite{bauschke2001joint} for the proof.
\begin{assumption}\label{A4}
The Bregman divergence satisfies a Lipschitz condition of the form
\begin{align*}
| \dr(x,z)-\dr(y,z) | \leq K\|x- y\|, 
\end{align*}
for all $x,y,z \in \X$.
\end{assumption}
When the function $\R$ is Lipschitz on $\X$, the Lipschitz condition on the Bregman divergence is automatically satisfied. Again, for the Euclidean distance the assumption evidently holds. In the particular case of KL divergence, the condition can be achieved via mixing a uniform distribution to avoid the boundary. More specifically, consider $\R(x)=\sum_{i=1}^dx(i)\log x(i)-x(i)$ for which $|\nabla\R(x)|=|\sum_{i=1}^d \log x(i)| \leq d\log T$ as long as $x\in \{\mu : \sum_{i=1}^d\mu(i)=1 ;   \mu(i) \geq \frac{1}{T} , \ \forall i \in [d]\}$. Therefore, in this case the constant $K$ is of $\mathcal{O}(\log T)$ (see e.g. \cite{jadbabaie2015online} for more comments on the assumption).

We are now ready to propose a three-step algorithm to solve the optimization problem formulated in terms of dynamic regret in \eqref{regret}. Let us define $\nabla_{i,t}:=\nabla f_{i,t}(x_{i,t})$ as the shorthand for the local gradients. Noticing the dynamic framework, we develop the decentralized online mirror descent via the following updates\footnote{The algorithm is initialized at $x_{i,t}=\mathbb{0}$ to avoid clutter in the analysis. In general, any initialization could work for the algorithm.}
\begin{subequations}
\begin{align}
\hx_{i,t+1}&=\argmin_{x\in \X}  \big\{ \eta_t\inn{x, \nabla_{i,t}} + \dr(x,y_{i,t}) \big\}, \label{xhupdate}\\
x_{i,t}&=A \hx_{i,t}, \ \ \ \  \text{and}  \ \ \ \ \  y_{i,t}=\sum_{j=1}^n [W]_{ij} x_{j,t} \label{xyupdate},
\end{align}
\end{subequations}
where $\{\eta_t\}_{t=1}^T$ is the step-size sequence, and $A\in \real^{d \times d}$ is the given dynamics in \eqref{dynamics} which is a common knowledge. Recall that $x_{i,t}\in \real^d$ represents the estimate of agent $i$ for the global minimizer $x^\star_t$ at time $t$. The step-size sequence is non-increasing and positive. Our proposed methodology can also be recognized as the decentralized variant of the Dynamic Mirror Descent algorithm in \cite{hall2015online} though we restrict our attention only to linear dynamics. 

The update \eqref{xhupdate} allows the algorithm to follow the private gradient while staying close to the previous estimates in the local neighborhood. This closeness is achieved in the sense of minimizing the Bregman divergence. On the other hand, the first update in \eqref{xyupdate} takes into account the potential dynamics that the minimizer sequence follow, and the second update in \eqref{xyupdate} is the {\it consensus} term averaging the estimates in the local neighborhood. 

\begin{assumption}\label{A5}
The mapping $A$ is assumed to be {\it non-expansive}. That is, the condition
\begin{align*}
\dr\big(Ax,Ay\big) \leq \dr\big(x,y\big),
\end{align*}
holds for all $x,y \in \X$, and $\norm{A} \leq 1$.
\end{assumption} 
The assumption postulates a natural constraint on the mapping $A$: it does not allow the effect of a poor prediction (at one step) to be amplified as the algorithm moves forward. 


\section{Theoretical Results}\label{theory}
In this section, we state our theoretical results and their consequences. The proofs are presented later in the Appendix (Section \ref{appendix}). Our main result (Theorem \ref{theorem1}) proves a bound on the dynamic regret, which captures the deviation of the minimizer trajectory from the dynamics $A$ (tracking error) as well as the decentralization cost (network error). After stating the theorem, we show that our result recovers previous rates on decentralized optimization (static regret) once the tracking error is removed. Also, it recovers previous rates on centralized online optimization in dynamic setting when the network error is factored out. Therefore, we establish that our generalization is bona fide.      

\subsection{Preliminary Results}
We start with a convergence result on the local estimates, which presents an upper bound on the deviation of the local estimates at each iteration from their consensual value. A similar result has been proven in \cite{li2016distributed} for {\it time-invariant} functions {\it without} dynamics; however, the following lemma extends that of \cite{li2016distributed} to {\it online} setting and takes into account the {\it dynamics} $A$ in \eqref{xyupdate}.

\begin{lemma}\label{meandeviation}(Network Error)
Let $\X$ be a convex set in a Banach space $\B$, $\R : \B \rightarrow \real$ denote a 1-strongly convex function on $\X$ with respect to a norm $\|\cdot\|$, and $\dr(\cdot,\cdot)$ represent the Bregman divergence with respect to $\R$, respectively. Furthermore, assume that the local functions are Lipschitz continuous (Assumption \ref{A1}), the matrix $W$ is doubly stochastic (Assumption \ref{A2}), and the mapping $A$ is non-expansive (Assumption \ref{A5}). Then, the local estimates $\{x_{i,t}\}_{t=1}^T$ generated by the updates \eqref{xhupdate}-\eqref{xyupdate} satisfy 
 $$\norm{x_{i,t+1}-\bx_{t+1}}\leq L\sqrt{n}\sum_{\tau=0}^t\eta_\tau\sigma^{t-\tau}_2(W),$$
 for any $i\in [n]$, where $\bx_t:=\frac{1}{n}\sum_{i=1}^nx_{i,t}$.
\end{lemma}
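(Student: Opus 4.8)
The plan is to unroll the coupled updates \eqref{xhupdate}--\eqref{xyupdate} into a single recursion for the averaged iterate $\bx_t$ and then control the per-agent deviation $x_{i,t+1}-\bx_{t+1}$ via the mixing properties of $W$. First I would write $x_{i,t+1} = A\hx_{i,t+1}$ and, using that $\hx_{i,t+1}$ is the Bregman-projected mirror step, record the ``one-step movement'' bound $\norm{\hx_{i,t+1} - y_{i,t}} \le \eta_t \norm{\nabla_{i,t}}_* \le L\eta_t$; this follows from the optimality condition for \eqref{xhupdate} together with $1$-strong convexity of $\R$ (the standard mirror-descent argument: $\dr(y_{i,t},\hx_{i,t+1}) + \dr(\hx_{i,t+1},y_{i,t}) \le \eta_t\inn{\nabla_{i,t}, y_{i,t}-\hx_{i,t+1}} \le \eta_t L \norm{y_{i,t}-\hx_{i,t+1}}$, then invoke \eqref{bregcond} on both divergence terms). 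Writing $\hx_{i,t+1} = y_{i,t} + e_{i,t}$ with $\norm{e_{i,t}} \le L\eta_t$, and using $y_{i,t} = \sum_j [W]_{ij} x_{j,t}$, we get $x_{i,t+1} = A\big(\sum_j [W]_{ij} x_{j,t} + e_{i,t}\big)$.

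Next I would pass to matrix/vector notation, stacking the agents' estimates, and unroll this recursion from the initialization $x_{i,0}=\mathbb{0}$. Because $W$ is doubly stochastic, averaging commutes appropriately: $\bx_{t+1} = A\big(\bx_t + \bar e_t\big)$ where $\bar e_t = \frac1n\sum_i e_{i,t}$. Subtracting, the deviation $x_{i,t+1}-\bx_{t+1}$ telescopes into a sum $\sum_{\tau=0}^{t} A^{t-\tau+1}\big(\text{(mixing of the }e_{\cdot,\tau}\text{ errors)}\big)$, where the ``mixing'' factor at lag $t-\tau$ involves the $i$-th row of $W^{t-\tau}$ minus the uniform row $\frac1n\mathbb{1}^\top$. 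Here I would use the spectral bound on the mixing operator on the subspace orthogonal to $\mathbb{1}$: for doubly stochastic $W$, $\norm{(W^{k} - \frac1n\mathbb{1}\mathbb{1}^\top)z} \le \sigma_2(W)^{k}\norm{z}$, which is exactly why $\sigma_2(W)$ (the second largest singular value, equivalently the spectral gap) appears. Combined with non-expansiveness $\norm{A}\le 1$ from Assumption \ref{A5}, the operator norm of $A^{t-\tau+1}$ is at most $1$, so each term in the sum is bounded by $\sigma_2(W)^{t-\tau}$ times the norm of the stacked error vector at time $\tau$, which is at most $L\sqrt{n}\,\eta_\tau$ (since each of the $n$ coordinates of $e_{\cdot,\tau}$ has norm $\le L\eta_\tau$, the Euclidean norm of the stack is $\le L\sqrt n \eta_\tau$). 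Summing over $\tau$ yields $\norm{x_{i,t+1}-\bx_{t+1}} \le L\sqrt{n}\sum_{\tau=0}^{t}\eta_\tau\sigma_2^{t-\tau}(W)$, as claimed.

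The step I expect to be the main obstacle is handling the interleaving of the (possibly non-symmetric, non-orthogonal) dynamics map $A$ with the consensus matrix $W$ inside the unrolled product. Unlike the purely-consensus setting of \cite{li2016distributed}, here each time step applies $A$ after mixing, so naively one gets products like $A W A W \cdots$ that do not factor as $A^{k} W^{k}$. The resolution is that we only ever need an \emph{operator-norm} bound on these products acting on the consensus-orthogonal component: since $\norm{A}\le 1$ uniformly, each $A$ factor contributes a factor $\le 1$ and can be ``peeled off'' without disturbing the $\sigma_2(W)^{k}$ decay coming from the $W$ factors — but one must be careful that the $W$-contraction is on the orthogonal complement of $\mathbb{1}$ and that $A$ does not mix the consensus direction back in; this is fine because $\bx_{t+1}$ is defined as the exact average and the decomposition $x_{i,t+1} = \bx_{t+1} + (\text{deviation})$ is done \emph{after} the recursion, so the bookkeeping reduces to tracking how each injected error $e_{\cdot,\tau}$ propagates, and each such error sees exactly $t-\tau$ subsequent mixing steps. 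I would be careful to state the lag correctly (the error injected at time $\tau$ is mixed $t-\tau$ times by the time we examine step $t+1$), matching the exponent $\sigma_2^{t-\tau}(W)$ in the statement, and to note that the $A^{t-\tau+1}$ prefactor is absorbed by $\norm{A}\le1$.
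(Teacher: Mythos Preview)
Your proposal is correct and tracks the paper's proof essentially step for step: the one-step bound $\norm{\hx_{i,t+1}-y_{i,t}}\le L\eta_t$ via the optimality condition and \eqref{bregcond}, the error variable $e_{i,t}:=\hx_{i,t+1}-y_{i,t}$, the unrolling of the coupled recursion from zero initialization, and the spectral mixing bound are exactly the ingredients the paper uses.

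One clarification worth making: the interleaving obstacle you flag in your last paragraph never actually arises. The map $A$ acts on the state space $\real^d$ while $W$ mixes over the agent index, so at the stacked level the recursion is $x_{t+1}=(W\otimes A)x_t+(I_n\otimes A)e_t$, and by the mixed-product property $(W\otimes A)^{k}=W^{k}\otimes A^{k}$ factors exactly---there are no $AWAW$-type products to peel apart. The paper unrolls directly to
\[
x_{i,t+1}-\bx_{t+1}=\sum_{\tau=0}^{t}\sum_{j=1}^{n}\Big([W^{t-\tau}]_{ij}-\tfrac{1}{n}\Big)A^{t+1-\tau}e_{j,\tau},
\]
and the only use of Assumption~\ref{A5} is the single bound $\norm{A^{t+1-\tau}e_{j,\tau}}\le\norm{e_{j,\tau}}$. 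The paper then finishes with the entrywise estimate $\sum_{j}\big|[W^{k}]_{ij}-\tfrac{1}{n}\big|\le\sqrt{n}\,\sigma_2^{k}(W)$ rather than an operator-norm bound on the stacked vector; this is a slightly cleaner route because it works verbatim for the general norm $\norm{\cdot}$ on $\real^d$, whereas your stacked-Euclidean argument implicitly mixes norms and would need a line of justification.
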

It turns out that the error bound depends on the network parameter $\sigma_2(W)$ and the step-size sequence $\{\eta_t\}_{t=1}^T$. It is well-known that smaller $\sigma_2(W)$ results in closeness of estimates to their average by speeding up the mixing rate (see e.g. results of \cite{duchi2012dual}). For instance, when the communication is all-to-all, i.e., the graph is complete, $\sigma_2(W)=0$ and the mixing rate is most rapid since each agent receives the private gradients of others only after one iteration delay. On the other hand, a usual diminishing step-size, which asymptotically goes to zero, can guarantee asymptotic closeness; however, such step-size sequence is most suitable for {\it static} rather than {\it dynamic} environments. We will discuss the choice of step-size carefully when we state our main result. Before that, we need to state another lemma as follows.
\begin{lemma}\label{auxlemma}(Tracking Error)
 Let $\X$ be a convex set in a Banach space $\B$, $\R : \B \rightarrow \real$ denote a 1-strongly convex function on $\X$ with respect to a norm $\|\cdot\|$, and $\dr(\cdot,\cdot)$ represent the Bregman divergence with respect to $\R$, respectively. Furthermore, assume that the matrix $W$ is doubly stochastic (Assumption \ref{A2}), the Bregman divergence satisfies the Lipschitz condition and the separate convexity (Assumptions \ref{A3}-\ref{A4}), and the mapping $A$ is non-expansive (Assumption \ref{A5}). Then, it holds that
\begin{align*}
\frac{1}{n}\sum_{i=1}^n\sum_{t=1}^T\bigg(\frac{1}{\eta_t}&\dr(x^\star_t,y_{i,t})-\frac{1}{\eta_t}\dr(x^\star_t,\hx_{i,t+1})\bigg) \\
&\leq \frac{2R^2}{\eta_{T+1}}+\sum_{t=1}^T\frac{1}{\eta_{t+1}}\norm{x^\star_{t+1}-Ax^\star_{t}},
\end{align*}
where $R^2:=\sup_{x,y\in \X} \dr(x, y).$
\end{lemma}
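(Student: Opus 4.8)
The plan is to analyze the single-step progress of the mirror-descent update \eqref{xhupdate} and telescope over time. First I would fix an agent $i$ and a time $t$, and apply the standard first-order optimality condition for the minimization defining $\hx_{i,t+1}$: since $\hx_{i,t+1}=\argmin_{x\in\X}\{\eta_t\inn{x,\nabla_{i,t}}+\dr(x,y_{i,t})\}$, for any comparator $u\in\X$ one has the "three-point" Bregman identity
\begin{align*}
\eta_t\inn{\hx_{i,t+1}-u,\nabla_{i,t}} \leq \dr(u,y_{i,t})-\dr(u,\hx_{i,t+1})-\dr(\hx_{i,t+1},y_{i,t}).
\end{align*}
I will \emph{not} actually need the gradient term here because the left-hand side of the lemma only involves the divergence differences; however, the optimality condition is what licenses writing $\dr(u,y_{i,t})-\dr(u,\hx_{i,t+1})$ as something controllable. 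Actually, re-examining the statement, the cleanest route is more direct: the quantity to bound is $\sum_{i,t}\frac{1}{\eta_t}\big(\dr(x^\star_t,y_{i,t})-\dr(x^\star_t,\hx_{i,t+1})\big)$, so I would keep the $\dr(x^\star_t,y_{i,t})$ term as is and try to telescope it against $\dr(x^\star_{t+1},\hx_{i,t+1})$ coming from the next time step, absorbing the discrepancy into $C_T$.

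The key chain of inequalities I would assemble is as follows. From Assumption \ref{A5} (non-expansiveness, $\dr(Ax,Ay)\le\dr(x,y)$) and the update $x_{i,t}=A\hx_{i,t}$, we get $\dr(Ax^\star_t, x_{i,t+1}) = \dr(Ax^\star_t, A\hx_{i,t+1}) \le \dr(x^\star_t,\hx_{i,t+1})$. Then I would relate $\dr(x^\star_{t+1}, y_{i,t+1})$ back to $\dr(x^\star_t,\hx_{i,t+1})$: using Assumption \ref{A3} (separate convexity) together with $y_{i,t+1}=\sum_j[W]_{ij}x_{j,t+1}$ and double stochasticity (Assumption \ref{A2}), averaging over $i$ gives $\frac1n\sum_i\dr(x^\star_{t+1},y_{i,t+1}) \le \frac1n\sum_i\dr(x^\star_{t+1},x_{i,t+1})$. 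Next, Assumption \ref{A4} (Lipschitz continuity of $\dr$ in its first argument, constant $K$) lets me swap the comparator: $\dr(x^\star_{t+1},x_{i,t+1}) \le \dr(Ax^\star_t,x_{i,t+1}) + K\norm{x^\star_{t+1}-Ax^\star_t}$. Chaining these three steps yields, after averaging over $i$,
\begin{align*}
\frac1n\sum_{i=1}^n\dr(x^\star_{t+1},y_{i,t+1}) \leq \frac1n\sum_{i=1}^n\dr(x^\star_t,\hx_{i,t+1}) + K\norm{x^\star_{t+1}-Ax^\star_t}.
\end{align*}
Dividing by $\eta_{t+1}$, rearranging, and summing over $t$ produces a telescoping sum in the divergence terms; the monotonicity of $\{\eta_t\}$ (non-increasing, so $1/\eta_t\le 1/\eta_{t+1}$) is used to line up the indices, the boundary terms are controlled by $R^2=\sup_{x,y}\dr(x,y)$ giving the $2R^2/\eta_{T+1}$ contribution, and the residuals collect into $\sum_t\frac{1}{\eta_{t+1}}\norm{x^\star_{t+1}-Ax^\star_t}$ (with $K$ presumably absorbed or set to the constant in $C_T$; I would track it and fold it in, or note $K\le$ relevant constant).

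The main obstacle I anticipate is bookkeeping the index shifts so the telescoping is exact: the term $\dr(x^\star_t,y_{i,t})$ at time $t$ must be matched with $\dr(x^\star_t,\hx_{i,t+1})$ at time $t$ and then re-expressed as $\dr(x^\star_{t+1},y_{i,t+1})$-type quantities at time $t+1$, and the step-sizes $1/\eta_t$ versus $1/\eta_{t+1}$ do not align perfectly, so I will need the inequality $\dr\ge 0$ together with non-increasing step-sizes to discard the right-sign leftover terms and the bound $\dr\le R^2$ to cap the first term $\dr(x^\star_1,y_{i,1})/\eta_1$ and any uncancelled tail term — getting the constant exactly $2R^2$ rather than something larger requires care about whether one or two boundary terms survive. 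A secondary subtlety is making sure Assumption \ref{A3} is applied with the correct simplex weights (the $i$-th row of $W$), which is valid precisely because $W$ is row-stochastic, and that the subsequent averaging over $i$ exploits column-stochasticity; both halves of Assumption \ref{A2} are genuinely needed.
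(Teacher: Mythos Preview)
Your proposal is correct and follows essentially the same route as the paper: the paper writes an explicit four-term add--subtract decomposition of $\frac{1}{\eta_t}\dr(x^\star_t,y_{i,t})-\frac{1}{\eta_t}\dr(x^\star_t,\hx_{i,t+1})$, but the three substantive steps (separate convexity of $\dr$ in its second argument together with double stochasticity of $W$, non-expansiveness of $A$, and the Lipschitz condition on $\dr$ in its first argument) are exactly the ones you identified, and your one-step recursion $\frac1n\sum_i\dr(x^\star_{t+1},y_{i,t+1})\le\frac1n\sum_i\dr(x^\star_t,\hx_{i,t+1})+K\norm{x^\star_{t+1}-Ax^\star_t}$ is precisely what the paper's decomposition encodes before telescoping. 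Your remarks about the step-size index mismatch, the two boundary terms contributing $R^2/\eta_1$ and $R^2\sum_t(1/\eta_{t+1}-1/\eta_t)$, and the need for both row- and column-stochasticity are all on target; note that the constant $K$ does appear in the paper's proof (the lemma statement as printed omits it).
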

In the update $\eqref{xhupdate}$, each agent $i$ calculates $\hx_{i,t+1}$, while staying close to $y_{i,t}$ by minimizing the Bregman divergence. Lemma \ref{auxlemma} establishes a bound on difference of these two quantities, when they are evaluated in the Bregman with respect to $x^\star_t$. The relation of left-hand side with dynamic regret is not immediate, and it becomes clear in the analysis. However, the term $\norm{x^\star_{t+1}-Ax^\star_{t}}=\norm{v_t}$ in the bound highlights the impact of mismatch noise $v_t$ in the tracking quality. Lemmata \ref{meandeviation} and \ref{auxlemma} disclose the critical parameters involved in the regret bound. We carefully discuss the consequences of these bounds in the subsequent section.


\subsection{Finite-horizon Performance: Regret Bound}
We now state our main result on the {\it non-asymptotic} performance of the decentralized online mirror descent in dynamic environments. The succeeding theorem provides the regret bound in the general case, and it is followed by a corollary characterizing the regret rate for the optimized fixed step-size sequence. In particular, the theorem uses the results in the previous section to present an upper bound on the dynamic regret decomposed into tracking and network errors. 

\begin{theorem}\label{theorem1}
Let $\X$ be a convex set in a Banach space $\B$, $\R : \B \rightarrow \real$ denote a 1-strongly convex function on $\X$ with respect to a norm $\|\cdot\|$, and $\dr(\cdot,\cdot)$ represent the Bregman divergence with respect to $\R$, respectively. Furthermore, assume that the local functions are Lipschitz continuous (Assumption \ref{A1}), the matrix $W$ is doubly stochastic (Assumption \ref{A2}), the Bregman divergence satisfies the Lipschitz condition and the separate convexity (Assumptions \ref{A3}-\ref{A4}), and the mapping $A$ is non-expansive (Assumption \ref{A5}). Then, using the local estimates $\{x_{i,t}\}_{t=1}^T$ generated by the updates \eqref{xhupdate}-\eqref{xyupdate}, the regret \eqref{regret} can be bounded as 
\begin{align*}
\textbf{\textit{Reg}}^d_T &\leq {\tt E_{Track}}+{\tt E_{Net}},
\end{align*}
where
$$
{\tt E_{Track}}:=\frac{2R^2}{\eta_{T+1}}+\sum_{t=1}^T\frac{K}{\eta_{t+1}}\norm{x^\star_{t+1}-Ax^\star_{t}}+L^2\sum_{t=1}^T\frac{\eta_t}{2},
$$
and
$$
{\tt E_{Net}}:=4L^2\sqrt{n}\sum_{t=1}^T\sum_{\tau=0}^{t-1}\eta_\tau\sigma^{t-\tau-1}_2(W).
$$
\end{theorem}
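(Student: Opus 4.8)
The plan is to bound the per-agent, per-time instantaneous regret $f_t(x_{i,t}) - f_t(x^\star_t)$ by first passing through the average iterate $\bx_t := \frac{1}{n}\sum_{j=1}^n x_{j,t}$ and then controlling both (a) the gap between each agent's estimate and this average (the network term) and (b) the gap between the average and the true minimizer (the tracking term). Concretely, write $f_t(x_{i,t}) - f_t(x^\star_t) = \big(f_t(x_{i,t}) - f_t(\bx_t)\big) + \big(f_t(\bx_t) - f_t(x^\star_t)\big)$. The first bracket is at most $L\norm{x_{i,t}-\bx_t}$ by Assumption \ref{A1}, and summing over $i$ and $t$ and invoking Lemma \ref{meandeviation} will produce a term of the form $L^2\sqrt{n}\sum_{t}\sum_{\tau}\eta_\tau \sigma_2^{t-\tau-1}(W)$; the factor $4$ in ${\tt E_{Net}}$ should emerge because the averaging/consensus argument also contributes a similar term when we relate $f_t(\bx_t)$ to the local linearizations (each agent's gradient is evaluated at $x_{i,t}$, not $\bx_t$, so a second $L\norm{x_{i,t}-\bx_t}$-type error appears, and convexity plus the Lipschitz bound on gradients gives a couple more copies).

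For the second bracket, I would use convexity of $f_t$ together with the decomposition $f_t = \frac{1}{n}\sum_i f_{i,t}$: bound $f_t(\bx_t) - f_t(x^\star_t) \le \frac{1}{n}\sum_i \inn{\nabla f_{i,t}(x_{i,t}), \bx_t - x^\star_t}$ modulo the $L\norm{x_{i,t}-\bx_t}$ correction terms already accounted for above. The inner product is then handled by the standard mirror-descent three-point analysis applied to the update \eqref{xhupdate}: the optimality condition for $\hx_{i,t+1}$ gives $\eta_t\inn{\nabla_{i,t}, \hx_{i,t+1} - x^\star_t} \le \dr(x^\star_t, y_{i,t}) - \dr(x^\star_t, \hx_{i,t+1}) - \dr(\hx_{i,t+1}, y_{i,t})$, and the last (negative) Bregman term combines with the strong-convexity lower bound \eqref{bregcond} and Young's inequality to absorb the cross term $\eta_t\inn{\nabla_{i,t}, y_{i,t} - \hx_{i,t+1}}$, leaving a residual $\frac{L^2\eta_t^2}{2}$ (hence the $L^2\sum_t \eta_t/2$ in ${\tt E_{Track}}$). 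Dividing by $\eta_t$, averaging over $i$, and summing over $t$, the leading Bregman terms are exactly what Lemma \ref{auxlemma} bounds — producing $\frac{2R^2}{\eta_{T+1}} + \sum_t \frac{1}{\eta_{t+1}}\norm{x^\star_{t+1} - Ax^\star_t}$, which after multiplying the mismatch term by $K$ (the source of this $K$ being the Lipschitz property of the Bregman divergence in Assumption \ref{A4}, used inside the proof of Lemma \ref{auxlemma}) gives the remaining pieces of ${\tt E_{Track}}$.

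A few technical points need care. First, I must correctly route $\bx_t - x^\star_t$ through the consensus step: since $W$ is doubly stochastic, $\bar{y}_t := \frac{1}{n}\sum_i y_{i,t} = \bx_t$, and since $A$ is linear, $\frac{1}{n}\sum_i x_{i,t+1} = A\big(\frac{1}{n}\sum_i \hx_{i,t+1}\big)$, so the averages propagate cleanly; but the mirror-descent step is applied per agent to $y_{i,t}$, not to $\bx_t$, so every time I want to replace $y_{i,t}$ or $\hx_{i,t+1}$ by the averaged quantity I pay an $\norm{x_{i,t}-\bx_t}$-type penalty, and I need to make sure the bookkeeping of these penalties matches the coefficient $4$. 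Second, the telescoping in Lemma \ref{auxlemma} relies on the non-expansiveness of $A$ (Assumption \ref{A5}) to pass from $\dr(x^\star_t, \hx_{i,t+1})$ at time $t$ to $\dr(x^\star_{t+1}, y_{i,t+1})$ at time $t+1$ via $\dr(A x^\star_t, A\hx_{i,t+1}) \le \dr(x^\star_t, \hx_{i,t+1})$ and the separate-convexity Assumption \ref{A3} for the consensus averaging $y_{i,t+1} = \sum_j [W]_{ij} x_{j,t+1}$; I will just cite Lemma \ref{auxlemma} for this.

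\textbf{Main obstacle.} The hardest part is the careful accounting of the consensus error: tracking exactly how many copies of $L\norm{x_{i,t}-\bx_t}$ arise (from replacing $x_{i,t}$ by $\bx_t$ in the function value, from the gradient being evaluated at the wrong point inside the inner product, and possibly from $y_{i,t}$ versus $\bx_t$), and confirming that after applying Lemma \ref{meandeviation} and summing, the constant in front of the double sum $\sum_t\sum_\tau \eta_\tau \sigma_2^{t-\tau-1}(W)$ is precisely $4L^2\sqrt{n}$ rather than something larger — this requires using convexity in the right direction and being slightly clever about which terms to bound by $L\norm{\cdot}$ versus which to keep. Everything else is the textbook mirror-descent potential argument plus the two lemmas already in hand.
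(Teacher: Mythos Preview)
Your proposal is correct and follows essentially the same route as the paper. The paper packages the convexity-plus-three-point step into a separate auxiliary lemma (Lemma~\ref{functiondeviation}), which bounds $\frac{1}{n}\sum_{i,t}\big(f_{i,t}(x_{i,t})-f_{i,t}(x^\star_t)\big)$ by exactly ${\tt E_{Track}} + 2L^2\sqrt{n}\sum_t\sum_\tau \eta_\tau\sigma_2^{t-\tau-1}(W)$; the remaining $2L^2\sqrt{n}$ in ${\tt E_{Net}}$ comes from the two Lipschitz corrections $L\norm{x_{i,t}-\bx_t}$ and $\frac{L}{n}\sum_i\norm{x_{i,t}-\bx_t}$ in passing from $f_t(x_{i,t})$ to $\frac{1}{n}\sum_i f_{i,t}(x_{i,t})$ via $\bx_t$, each bounded by Lemma~\ref{meandeviation}. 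The only cosmetic difference is that the paper linearizes at $x_{i,t}$ and decomposes $x_{i,t}-x^\star_t = (\hx_{i,t+1}-x^\star_t)+(x_{i,t}-y_{i,t})+(y_{i,t}-\hx_{i,t+1})$, whereas you propose linearizing with $\bx_t-x^\star_t$; both routes produce the same four copies of the network error after the dust settles, so your ``main obstacle'' is resolved exactly as you anticipate.
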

\begin{corollary}\label{corollary1}
Under the same conditions stated in Theorem \ref{theorem1}, using the fixed step-size $\eta=\sqrt{(1-\sigma_2(W))C_T/T}$ yields a regret bound of order
$$
\textbf{\textit{Reg}}^d_T \leq \mathcal{O}\left(\sqrt{\frac{C_TT}{1-\sigma_2(W)}}\right),
$$
where $C_T=\sum_{t=1}^T\norm{x^\star_{t+1}-Ax^\star_{t}}$.
\end{corollary}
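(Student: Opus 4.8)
The plan is to substitute the constant step-size directly into the bound of Theorem~\ref{theorem1} and then optimize. First I would set $\eta_t=\eta$ for every $t$ in the two error terms. The tracking term collapses to
\[
{\tt E_{Track}}=\frac{2R^2}{\eta}+\frac{K}{\eta}\sum_{t=1}^T\norm{x^\star_{t+1}-Ax^\star_{t}}+\frac{L^2\eta T}{2}=\frac{2R^2+KC_T}{\eta}+\frac{L^2\eta T}{2},
\]
using the definition $C_T=\sum_{t=1}^T\norm{x^\star_{t+1}-Ax^\star_{t}}$.

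For the network term I would factor out $\eta$ and reindex the inner sum by $k=t-\tau-1$, so that $\sum_{\tau=0}^{t-1}\sigma_2^{t-\tau-1}(W)=\sum_{k=0}^{t-1}\sigma_2^k(W)\leq\sum_{k=0}^{\infty}\sigma_2^k(W)=\frac{1}{1-\sigma_2(W)}$, where the geometric series converges because Assumption~\ref{A2} (connectivity together with double stochasticity and positive diagonal) forces $\sigma_2(W)<1$. This yields ${\tt E_{Net}}\leq\frac{4L^2\sqrt{n}\,\eta T}{1-\sigma_2(W)}$, and hence
\[
\textbf{\textit{Reg}}^d_T\leq\frac{2R^2+KC_T}{\eta}+\Big(\frac{L^2}{2}+\frac{4L^2\sqrt{n}}{1-\sigma_2(W)}\Big)\eta T.
\]

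Next I would plug in $\eta=\sqrt{(1-\sigma_2(W))C_T/T}$. Then $\frac{KC_T}{\eta}=K\sqrt{C_TT/(1-\sigma_2(W))}$; the dominant piece of the second group, $\frac{4L^2\sqrt{n}}{1-\sigma_2(W)}\eta T$, equals $4L^2\sqrt{n}\sqrt{C_TT/(1-\sigma_2(W))}$; and $\frac{L^2}{2}\eta T=\frac{L^2}{2}\sqrt{(1-\sigma_2(W))C_TT}\leq\frac{L^2}{2}\sqrt{C_TT/(1-\sigma_2(W))}$ since $1-\sigma_2(W)\leq1$. The remaining term $\frac{2R^2}{\eta}=2R^2\sqrt{T/((1-\sigma_2(W))C_T)}$ is the only one not automatically of the right shape; under the standard mild assumption that $C_T$ is bounded below by a positive constant, it too is $\mathcal{O}\!\left(\sqrt{C_TT/(1-\sigma_2(W))}\right)$. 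Treating $L,R,K,n$ as constants and summing the four $\mathcal{O}\!\left(\sqrt{C_TT/(1-\sigma_2(W))}\right)$ contributions gives the claimed rate.

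The main obstacle here is bookkeeping rather than a genuine difficulty: one must carefully verify the geometric-series bound for the double sum in ${\tt E_{Net}}$ (in particular that $\sigma_2(W)<1$, which is exactly what Assumption~\ref{A2} provides via uniqueness of $\lambda_1(W)=1$), and one must decide how to absorb the $R^2/\eta$ term, which scales like $1/\sqrt{C_T}$ and therefore needs $C_T=\Omega(1)$ to fit inside the stated order. Everything else is a direct substitution followed by the elementary estimate $1-\sigma_2(W)\leq1$.
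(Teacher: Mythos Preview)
Your proposal is correct and follows exactly the approach the paper indicates: the paper's own proof is the single sentence ``The proof of the corollary follows directly from substituting the step-size into the bound in Theorem~\ref{theorem1},'' and you have simply spelled out that substitution with the geometric-series estimate for the double sum. Your remark that the $2R^2/\eta$ term needs $C_T=\Omega(1)$ to fit under the stated $\mathcal{O}$ bound is a valid caveat that the paper leaves implicit.
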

\begin{proof}
The proof of the corollary follows directly from substituting the step-size into the bound in Theorem \ref{theorem1}.
\end{proof}
Theorem \ref{theorem1} decomposes the upper bound into two terms for a general step-size $\{\eta_t\}_{t=1}^T$. In Corollary \ref{corollary1}, we fix the step-size and observe the role of $C_T$ in controlling the regret bound . As we recall from \eqref{dynamics}, this quantity collects mismatch errors $\{v_t\}_{t=1}^T$ that are not necessarily Gaussian or of some statistical distribution. In Section \ref{prelim}, we discussed that our setup generalizes some of previous works, and it is important to notice that our result recovers the corresponding rates when restricted to those special cases:
\begin{itemize}
\item[1)] When the global function $f_t(x)=f(x)$ is time-invariant, the minimizer sequence $\{x^\star_t\}_{t=1}^T$ is fixed, i.e., the mapping $A=I_d$ and $v_t=\mathbb{0}$ in \eqref{dynamics}. In this case in Theorem \ref{theorem1}, the term involving $\norm{x^\star_{t+1}-Ax^\star_{t}}$ in ${\tt E_{Track}}$ is equal to zero, and we can use the step-size sequence $\eta=\sqrt{(1-\sigma_2(W))/T}$ to recover the result of comparable algorithms, such as \cite{duchi2012dual} in which distributed dual averaging is proposed. 
\item[2)] The same argument holds when the global function is time-variant, but the comparator variables are fixed. In this case, the problem is reduced to minimizing the static regret \eqref{staticregret}. Since $\norm{x^\star_{t+1}-Ax^\star_{t}}=0$ again, our result recovers that of  \cite{hosseini2013online} on distributed online dual averaging. 
\item[3)] When the graph is complete, $\sigma_2(W)=0$ and the ${\tt E_{Net}}$ term in Theorem \ref{theorem1} vanishes. We then recover the results of  \cite{hall2015online} on centralized online learning in dynamic environments. 
\end{itemize}

As we mentioned earlier, when mismatch errors $\{v_t\}_{t=1}^T$ are large, the minimizer sequence $\{x^\star_t\}_{t=1}^T$ fluctuates drastically, and $C_T$ could become linear in time. The bound in the corollary is then not useful in the sense of keeping the dynamic regret sub-linear. Such behavior is natural since even in the centralized online optimization, the algorithm receives only a {\it single} gradient to predict the next step\footnote{Even in a more structured problem setting such as Kalman filtering, when we know the exact value of a state at a time step, we cannot exactly predict the next state, and we incur a minimum mean-squared error of  the size of noise variance.}. As discussed in Section \ref{prelim}, in this worst-case, the problem is generally intractable. However, our goal was to consider $C_T$ as a complexity measure of the problem environment and express the regret bound with respect to this parameter. In practice, if the algorithm is allowed to query {\it multiple} gradients per time, the error would be reduced, but this direction is beyond the scope of this paper.

\subsection{Optimization with Stochastic Gradients}
In many engineering applications such as decentralized tracking, learning, and estimation, agents observations are usually noisy. In this section, we demonstrate that the result of Theorem \ref{theorem1} does not rely on exact gradients, and it holds true in expectation sense when agents  follow stochastic gradients. Mathematically speaking, let $\F_t$ be the $\sigma$-field containing all information prior to the outset of round $t+1$. Let also ${\boldsymbol \nabla}_{i,t}$ represent the stochastic gradient observed by agent $i$ after calculating the estimate $\x_{i,t}$. Then, we define a stochastic oracle that provides noisy gradients respecting the following conditions 
\begin{align}\label{condition}
\e{\vphantom{\norm{{\boldsymbol \nabla}_{i,t}}_*^2}{\boldsymbol \nabla}_{i,t}\big\vert \F_{t-1}}=\nabla_{i,t} \ \ \ \ \ \ \ \ \ \ \e{\norm{{\boldsymbol \nabla}_{i,t}}_*^2\big\vert \F_{t-1}} \leq G^2.
\end{align}
The new updates take the following form
\begin{subequations}
\begin{align}
\xhx_{i,t+1}&=\argmin_{x\in \X}  \big\{ \eta_t\inn{x, {\boldsymbol \nabla}_{i,t}} + \dr(x,\y_{i,t}) \big\}, \label{xhupdate2}\\
\x_{i,t}&=A \xhx_{i,t}, \ \ \ \  \text{and}  \ \ \ \ \  \y_{i,t}=\sum_{j=1}^n [W]_{ij} \x_{j,t} \label{xyupdate2},
\end{align}
\end{subequations}
where the only distinction between \eqref{xhupdate2} and \eqref{xhupdate} is using the stochastic gradient in the former. A commonly used model to generate stochastic gradients satisfying \eqref{condition} is an additive zero-mean noise with bounded variance. We now discuss the impact of stochastic gradients in the following theorem.

\begin{theorem}\label{theorem2}
Let $\X$ be a convex set in a Banach space $\B$, $\R : \B \rightarrow \real$ denote a 1-strongly convex function on $\X$ with respect to a norm $\|\cdot\|$, and $\dr(\cdot,\cdot)$ represent the Bregman divergence with respect to $\R$, respectively. Furthermore, assume that the local functions are Lipschitz continuous (Assumption \ref{A1}), the matrix $W$ is doubly stochastic (Assumption \ref{A2}), the Bregman divergence satisfies the Lipschitz condition and the separate convexity (Assumptions \ref{A3}-\ref{A4}), and the mapping $A$ is non-expansive (Assumption \ref{A5}). Let the local estimates $\{\x_{i,t}\}_{t=1}^T$ be generated by updates \eqref{xhupdate2}-\eqref{xyupdate2}, where the stochastic gradients satisfy the condition \eqref{condition}. Then,
\begin{align*}
\e{\textbf{\textit{Reg}}^d_T} &\leq \frac{2R^2}{\eta_{T+1}}+\sum_{t=1}^T\frac{K}{\eta_{t+1}}\norm{x^\star_{t+1}-Ax^\star_{t}}\\
&+G^2\sum_{t=1}^T\frac{\eta_t}{2}
+4G^2\sqrt{n}\sum_{t=1}^T\sum_{\tau=0}^{t-1}\eta_\tau\sigma^{t-\tau-1}_2(W).
\end{align*}
\end{theorem}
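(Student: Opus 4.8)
The plan is to re-run the proof of Theorem~\ref{theorem1} essentially line for line, exploiting that the stochastic updates \eqref{xhupdate2}--\eqref{xyupdate2} are algebraically identical to \eqref{xhupdate}--\eqref{xyupdate} with ${\boldsymbol \nabla}_{i,t}$ replacing $\nabla_{i,t}$. In the deterministic proof the \emph{true} gradient is used in exactly two places: through convexity, to pass from $f_{i,t}(\x_{i,t})-f_{i,t}(x^\star_t)$ to the linear term $\inn{\nabla_{i,t},\x_{i,t}-x^\star_t}$; and through the bound $\norm{\nabla_{i,t}}_*\le L$, invoked inside Lemma~\ref{meandeviation} and in the Young-inequality step that, after first-order optimality of \eqref{xhupdate} and the three-point identity for $\dr$, leaves the residual $\tfrac{\eta_t}{2}\norm{\nabla_{i,t}}_*^2$. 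Lemma~\ref{auxlemma} involves no gradients, so it holds pathwise for the stochastic iterates $\{\y_{i,t}\},\{\xhx_{i,t}\}$. Hence I would only need to handle the convexity step in expectation and replace $L$ by $G$ in the residual and the consensus estimate.

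For the convexity step: as in Theorem~\ref{theorem1}, convexity and Lipschitz continuity of the local functions together with $f_t=\tfrac1n\sum_j f_{j,t}$ give $\tfrac1n\sum_i\sum_t\big(f_t(\x_{i,t})-f_t(x^\star_t)\big)\le \tfrac{2L}{n}\sum_i\sum_t\norm{\x_{i,t}-\xbx_t}+\tfrac1n\sum_j\sum_t\inn{\nabla_{j,t},\x_{j,t}-x^\star_t}$, with $\xbx_t=\tfrac1n\sum_i\x_{i,t}$. Now write $\inn{\nabla_{j,t},\x_{j,t}-x^\star_t}=\inn{{\boldsymbol \nabla}_{j,t},\x_{j,t}-x^\star_t}+\inn{\nabla_{j,t}-{\boldsymbol \nabla}_{j,t},\x_{j,t}-x^\star_t}$. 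Since $\x_{j,t}$ is built from ${\boldsymbol \nabla}_{\cdot,t-1}$ and earlier iterates it is $\F_{t-1}$-measurable, and $x^\star_t$ is deterministic, so by \eqref{condition} one has $\E\big[\inn{\nabla_{j,t}-{\boldsymbol \nabla}_{j,t},\x_{j,t}-x^\star_t}\mid\F_{t-1}\big]=0$ and the cross term drops out under total expectation. We are then left to bound $\E\big[\tfrac1n\sum_j\sum_t\inn{{\boldsymbol \nabla}_{j,t},\x_{j,t}-x^\star_t}\big]$, which is precisely the quantity the deterministic argument controls, now written in the gradients actually used in \eqref{xhupdate2}.

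For the $L\to G$ replacement: splitting $\inn{{\boldsymbol \nabla}_{j,t},\x_{j,t}-x^\star_t}=\inn{{\boldsymbol \nabla}_{j,t},\x_{j,t}-\y_{j,t}}+\inn{{\boldsymbol \nabla}_{j,t},\y_{j,t}-x^\star_t}$ as in Theorem~\ref{theorem1}, the first piece is bounded by $\norm{{\boldsymbol \nabla}_{j,t}}_*\norm{\x_{j,t}-\y_{j,t}}$ with $\norm{\x_{j,t}-\y_{j,t}}\le\sum_k[W]_{jk}\big(\norm{\x_{j,t}-\xbx_t}+\norm{\x_{k,t}-\xbx_t}\big)$ (a consensus term), and the second piece, via first-order optimality of \eqref{xhupdate2}, Young's inequality and the three-point identity, by $\tfrac{\eta_t}{2}\norm{{\boldsymbol \nabla}_{j,t}}_*^2+\tfrac1{\eta_t}\big(\dr(x^\star_t,\y_{j,t})-\dr(x^\star_t,\xhx_{j,t+1})\big)$. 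Taking expectations: $\E[\norm{{\boldsymbol \nabla}_{j,t}}_*^2\mid\F_{t-1}]\le G^2$ turns the residual into $G^2\sum_t\tfrac{\eta_t}{2}$; the Bregman differences telescope via Lemma~\ref{auxlemma}, contributing $\tfrac{2R^2}{\eta_{T+1}}+\sum_t\tfrac{K}{\eta_{t+1}}\norm{x^\star_{t+1}-Ax^\star_t}$; and for the consensus pieces I would carry $\norm{{\boldsymbol \nabla}_{i,\tau}}_*$ symbolically through the proof of Lemma~\ref{meandeviation}, then take expectations, using $\E[\norm{{\boldsymbol \nabla}_{i,\tau}}_*\mid\F_{\tau-1}]\le\big(\E[\norm{{\boldsymbol \nabla}_{i,\tau}}_*^2\mid\F_{\tau-1}]\big)^{1/2}\le G$ and the tower rule to decouple the gradient norm at round $t$ from $\norm{\x_{j,t}-\xbx_t}$ (which depends on earlier noise), to get $\E\norm{\x_{i,t}-\xbx_t}\le G\sqrt n\sum_{\tau=0}^{t-1}\eta_\tau\sigma_2^{t-\tau-1}(W)$. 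Since $L=\norm{\E[{\boldsymbol \nabla}_{i,t}\mid\F_{t-1}]}_*\le G$, the two consensus contributions (prefactors $2L$ and $2G$) combine into $4G^2\sqrt n\sum_t\sum_{\tau=0}^{t-1}\eta_\tau\sigma_2^{t-\tau-1}(W)$, and summing the four pieces yields the claimed bound on $\E[\textbf{\textit{Reg}}^d_T]$.

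I expect the only genuinely non-routine part to be the measurability bookkeeping around the filtration: one must check that $\x_{i,t}$ (hence $\nabla_{i,t}=\nabla f_{i,t}(\x_{i,t})$) is $\F_{t-1}$-measurable so the cross term above is a martingale difference with zero mean, and that in the consensus estimate one conditions on $\F_{t-1}$ before extracting $\E[\norm{{\boldsymbol \nabla}_{j,t}}_*\mid\F_{t-1}]\le G$, because $\norm{\x_{j,t}-\xbx_t}$ is itself a function of the earlier stochastic gradients. Everything else is a verbatim repetition of the proof of Theorem~\ref{theorem1} with $L$ replaced by $G$.
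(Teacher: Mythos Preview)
Your proposal is correct and follows essentially the same route as the paper: rerun the deterministic argument with ${\boldsymbol \nabla}_{i,t}$ in place of $\nabla_{i,t}$, insert the martingale-difference term $\inn{\nabla_{i,t}-{\boldsymbol \nabla}_{i,t},\x_{i,t}-x^\star_t}$ and kill it via conditioning on $\F_{t-1}$, carry $\norm{{\boldsymbol \nabla}_{i,\tau}}_*$ through Lemma~\ref{meandeviation}, and replace $L$ by $G$ after taking expectations using \eqref{condition}; Lemma~\ref{auxlemma} is gradient-free and applies pathwise. Your discussion of the measurability bookkeeping (that $\x_{i,t}$ is $\F_{t-1}$-measurable, and that one must condition before extracting the $G$ bound in the consensus term since $\norm{\x_{j,t}-\xbx_t}$ depends on earlier noise) is in fact more explicit than the paper's own treatment.
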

The theorem indicates that when using stochastic gradients, the result of Theorem \ref{theorem1} holds true in expectation sense. Thus, the algorithm can be used in dynamic environments where agents observations are noisy.


\section{Numerical Experiment: State Estimation and Tracking Dynamic Parameters}\label{application}
The generality of Mirror Descent stems from the freedom over the selection of the Bregman divergence. A particularly well-known Bergman divergence is the Euclidean distance, which turns our framework to state estimation and tracking. In this section, we focus on this scenario as an application of our method. Distributed state estimation and tracking dynamic parameters has a long history in the literature of control and signal processing. However, there are key distinctions in our approach to the dynamical model of the parameter and agents observations. We elaborate on these differences as we describe our numerical experiment. 

Let us consider a slowly maneuvering target in the $2D$ plane and assume that each position component of the target evolves independently according to a near constant velocity model \cite{bar1987tracking}. The state of the target at each time consists of four components: horizontal position, vertical position, horizontal velocity, and vertical velocity. Therefore, representing such state at time $t$ by $x^\star_{t} \in \real^{4}$, the state space model takes the form
$$
x^\star_{t+1}=A x^\star_{t} + v_t,
$$ 
where $v_t \in \real^4$ is the system noise, and using $\otimes$ for Kronecker product, $A$ is described as
$$
A=I_2 \otimes \begin{bmatrix}
1 & \epsilon \\
0 & 1
\end{bmatrix},
$$
with $\epsilon$ being the sampling interval\footnote{The sampling interval of $\epsilon$ (seconds) is equivalent to the sampling rate of $1/\epsilon~(Hz)$.}. The goal is to track $x^\star_{t}$ using a network of agents. This problem has been studied in the context of distributed Kalman filtering \cite{olfati2007distributed,cattivelli2010diffusion}, state estimation \cite{khan2010connectivity,das2013distributed,han2015stochastic}, and particle filtering \cite{gu2007distributed,hlinka2012likelihood,li2015distributed}. However, as opposed to Kalman filtering, we need not assume that the system noise $v_t$ is Gaussian. Also, unlike particle filtering, we do not assume receiving a large number of samples (particles) per iteration since our setup is online, i.e., agents only observe one sample per iteration. Moreover, we do not assume a statistical distribution on $v_t$ in our analysis, which makes our framework different from state estimation. We have a model-free approach in which the noise can be deterministic with unknown structure, or even stochastic with dependence over time. For our experiment, we generate this noise according to a zero-mean Gaussian distribution with covariance matrix $\Sigma$ as follows
$$
\Sigma=\sigma^2_v I_2 \otimes \begin{bmatrix}
\epsilon^3/3 & \epsilon^2/2 \\
\epsilon^2/2 & \epsilon
\end{bmatrix}.
$$ 
We let the sampling interval be $\epsilon=0.1$ seconds which is equivalent to frequency $10~Hz$. The constant $\sigma^2_v$ is changed in different scenarios, so we describe the choice of this parameter later. Importantly, we remark that though this noise is generated randomly, it is fixed with each run of our experiment later. That is, the noise is generated once and remains fixed throughout, so it can be considered deterministic.

We consider a sensor network of $n=25$ agents located on a $5 \times 5$ grid. Agents aim to track the moving target $x^\star_{t}$ collaboratively. At time $t$, agent $i$ observes $\z_{i,t}$, a noisy version of one coordinate of $x^\star_{t}$ as follows
$$
\z_{i,t}=\ee_{k_i}^\top x^\star_{t} + \w_{i,t},
$$
where $\w_{i,t} \in \real$ denotes the observation noise, and $\ee_k$ is the $k$-th unit vector in the standard basis of $\real^4$ for $k\in \{1,2,3,4\}$. We divide agents into four groups, and for each group we choose one specific $k_i$ from the set $\{1,2,3,4\}$. Furthermore, the observation noise must satisfy the standard assumption of being zero-mean and finite-variance. Our results are not dependent on Gaussian noise, so we generate $\w_{i,t}$ independently from a uniform distribution on $[-1,1]$.

Though not locally observable to each agent, it is straightforward to see that the target $x^\star_{t}$ is {\it globally} identifiable from the standpoint of the whole network (see e.g. \cite{khan2010connectivity} for the exact definition of the global identifiability in a general tracking problem).

At time $t$, each agent $i$ forms an estimate $\x_{i,t}$ of $x^\star_{t}$ based on observations $\{\z_{i,\tau}\}_{\tau=1}^{t-1}$. After that, the new signal $\z_{i,t}$ becomes available to the agent. The online nature of the problem allows us to pose it as an instance of online optimization formulated in \eqref{regret}. To derive an explicit update for $\x_{i,t}$, we need to introduce the loss functions. We use the {\it local} square loss 
$$
f_{i,t}(x):=\e{\left(\z_{i,t}-\ee_{k_i}^\top x\right)^2  \big\vert  x^\star_{t}},
$$
for each agent $i$, resulting in the {\it network} loss  
\begin{align*}
f_t(x):=\frac{1}{n}\sum_{i=1}^n\e{\left(\z_{i,t}-\ee_{k_i}^\top x\right)^2 \big\vert x^\star_{t}}.
\end{align*}
In our experiment $v_t$ is a deterministic noise, but in both definitions $x^\star_{t}$ could be random in the case that $v_t$ is random, so we use the conditional expectation to be precise. Now using Euclidean distance as the Bregman divergence in updates \eqref{xhupdate2}-\eqref{xyupdate2}, we can derive the following update
\begin{align*}
\x_{i,t}&=\sum_{j=1}^n [W]_{ij} A\x_{j,t-1}+ \eta_t A\ee_{k_i} \left(\z_{i,t-1}-\ee_{k_i}^\top \x_{i,t-1}\right).
\end{align*}
We fix the step size to $\eta_t=\eta=0.5$ since using diminishing step size is not useful in tracking unless we have diminishing system noise \cite{acemoglu2008convergence}. The update is akin to {\it consensus+innovation} updates in the literature (see e.g. \cite{acemoglu2008convergence,shahrampour2013online,kar2012distributed}) though we recall that we did not analyze this update for a system noise $v_t$ with a statistical distribution.

It is proved in \cite{shahinACC2016} that in decentralized tracking, the dynamic regret can be presented in terms of the tracking error $\x_{i,t}-x^\star_{t}$ of all agents. More specifically, the dynamic regret averages the tracking error over space and time (when normalized by $T$). Exploiting this connection and combining that with the result of Theorem \ref{theorem2}, we observe that once the parameter does not deviate too much from the dynamics, i.e., when $\sum_{t=1}^T\norm{v_t}$ is small, the bound on the dynamic regret (or equivalently the collective tracking error) becomes small and vice versa. 

\begin{figure}[t!]
\centering
\includegraphics[trim = 10mm 53mm 0mm 58mm, clip, scale=0.45]{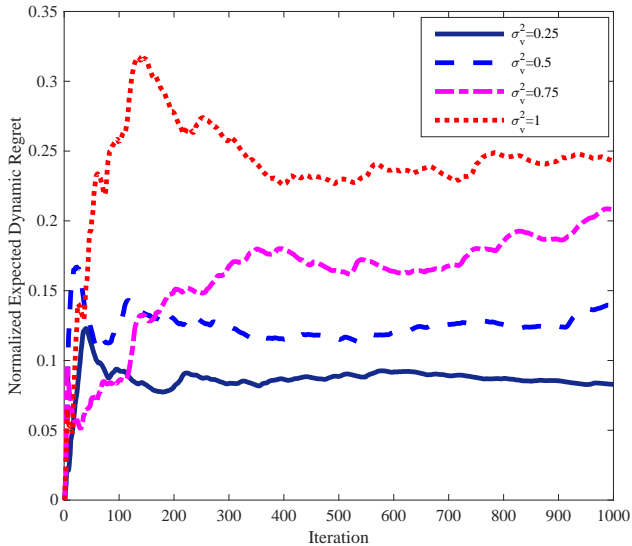}
\caption{The plot of dynamic regret versus iterations. Naturally, when $\sigma^2_v$ is smaller, the innovation noise added to the dynamics is smaller with high probability, and the network incurs a lower dynamic regret. In this plot, the dynamic regret is normalized by iterations, so the $y$-axis is $\e{\textbf{\textit{Reg}}^d_T}/T$.}
\label{Regretvs}
\end{figure}

\begin{figure}[h!]
\centering
\includegraphics[trim =8mm 47mm 0mm 47mm, clip, scale=0.44]{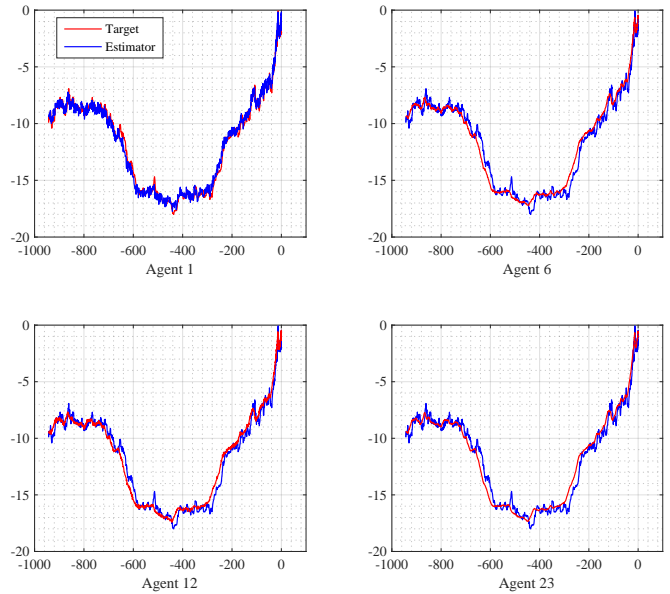}
\caption{The trajectory of $x^\star_t$ over $T=1000$ iterations is shown in red. We also depict the trajectory of the estimator $\x_{i,t}$ (shown in blue) for $i\in \{1,6,12,23\}$ and observe that it closely follows $x^\star_t$ in every case.}
\label{Agents}
\end{figure}

We demonstrate this intuitive idea by tuning $\sigma^2_v$. Larger values for $\sigma^2_v$ are more likely to cause deviations from the dynamics $A$; therefore, we expect large dynamic regret (worse performance) when $\sigma^2_v$ is large. In Fig. \ref{Regretvs}, we plot the dynamic regret for $\sigma^2_v\in \{0.25,0.5,0.75,1\}$. For each specific value of $\sigma^2_v$, we run the experiment 50 times and average out the dynamic regret over all runs. As we conjectured, the performance improves once $\sigma^2_v$ tends to smaller values.

Let us now focus on the case that $\sigma^2_v=0.5$. For one run of this case, we provide a snapshot of the target trajectory (in red) in Fig. \ref{Agents} and plot the estimator trajectory (in blue) for agents $i\in \{1,6,12,23\}$. While the dynamic regret can be controlled in the expectation sense (Theorem \ref{theorem2}), Fig. \ref{Agents} suggests that agents' estimators closely follow the trajectory of the moving target with high probability.


\section{Conclusion}\label{conclusion}
The work unifies a number of frameworks in the literature by addressing {\it decentralized}, {\it online} optimization in {\it dynamic} environments. We considered tracking the minimizer of a global time-varying convex function via a network of agents. The minimizer of the global function has a dynamics known to agents, but an unknown, unstructured noise causes deviation from this dynamics. The global function can be written as a sum of local functions at each time step, and each agent can only observe its associated local function. However, these local functions appear sequentially, and agents do not have a prior knowledge of the future cost functions. 

Our proposed algorithm for this setup can be cast as a decentralized version of Mirror Descent. However, the algorithm possesses two additional steps to include agents interactions and dynamics of the minimizer. We used a notion of network dynamic regret to measure the performance of our algorithm versus its offline counterpart. We established that the regret bound scales inversely in the spectral gap of the network and captures the deviation of minimizer sequence with respect to the given dynamics. We next considered stochastic optimization, where agents observe only noisy versions of their local gradients, and we proved that in this case, our regret bound holds true in the expectation sense. We showed that our generalization is valid and convincing in the sense that the  results recover those of distributed optimization in online and offline setting. We also applied our method to decentralized tracking of dynamic parameters in the numerical experiments. 

Our work opens a few directions for future works. We conjecture that our theoretical results can be strengthened in a setup where agents receive multiple gradients per time step. However, as mentioned in Section \ref{theory}, this is still an open question. Also, the result of Corollary \ref{corollary1} assumes the step-size is tuned in advance. This would require the knowledge of $C_T$ or an upper bound on the quantity. For the centralized setting, one can potentially avoid the issue using doubling tricks which requires online accumulation of the mismatch noise $v_t$.  
However, it is more natural to consider that this noise is not fully observable in the decentralized setting. Therefore, an adaptive solution to step-size tuning remains open for the future investigation.



\section{Appendix}\label{appendix}
The following lemma is standard in the analysis of mirror descent. We state the lemma here and revoke it in our analysis later. 
\begin{lemma}[Beck and Teboulle \cite{beck2003mirror}]\label{beck}
Let $\X$ be a convex set in a Banach space $\B$, $\R : \B \rightarrow \real$ denote a 1-strongly convex function on $\X$ with respect to a norm $\norm{\cdot}$, and $\dr(\cdot,\cdot)$ represent the Bregman divergence with respect to $\R$, respectively. Then, any update of the form $$x^\star=\argmin_{x \in \X} \big\{ \inn{a, x}+\dr(x,c)\big\},$$ satisfies the following inequality $$\inn{x^\star-d, a } \leq \dr(d,c)-\dr(d,x^\star)-\dr(x^\star,c),$$ for any $d\in \X$. 
\end{lemma}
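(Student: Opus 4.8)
The plan is to prove Lemma~\ref{beck} (the Beck--Teboulle inequality) directly from the first-order optimality condition for the convex program defining $x^\star$, together with the three-point identity for the Bregman divergence. First I would write the objective as $\phi(x) := \inn{a,x} + \dr(x,c)$, which is convex in $x$ since $\R$ is convex, so $\dr(\cdot,c)$ is convex. Because $x^\star$ minimizes $\phi$ over the convex set $\X$, the standard variational inequality for constrained convex minimization gives $\inn{\nabla\phi(x^\star), d - x^\star} \geq 0$ for every $d \in \X$. Computing the gradient, $\nabla\phi(x^\star) = a + \nabla\R(x^\star) - \nabla\R(c)$, so this rearranges to
\begin{align*}
\inn{x^\star - d,\, a} \leq \inn{\nabla\R(x^\star) - \nabla\R(c),\, d - x^\star}.
\end{align*}

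Next I would expand the right-hand side using the definition $\dr(x,y) = \R(x) - \R(y) - \inn{x-y,\nabla\R(y)}$. The key computational step is the three-point (law of cosines) identity
\begin{align*}
\dr(d,c) - \dr(d,x^\star) - \dr(x^\star,c) = \inn{\nabla\R(x^\star) - \nabla\R(c),\, d - x^\star},
\end{align*}
which one verifies by writing out all three Bregman terms in terms of $\R$ and its gradients and cancelling: the $\R(d)$ and $\R(c)$ contributions cancel, the $\R(x^\star)$ terms combine, and the inner-product terms collect into exactly the right-hand side. Combining this identity with the variational inequality above yields $\inn{x^\star - d, a} \leq \dr(d,c) - \dr(d,x^\star) - \dr(x^\star,c)$, which is the claim.

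The one technical subtlety — and the main obstacle to a fully rigorous argument — is the differentiability of $\R$ and the justification of the first-order optimality condition when $x^\star$ lies on the boundary of $\X$. In the generality stated (a $1$-strongly convex $\R$ on a Banach space), one should either assume $\R$ is differentiable on an open set containing $\X$, or work with subgradients and the normal cone of $\X$; in either case the inequality $\inn{\nabla\phi(x^\star), d - x^\star}\geq 0$ (or its subgradient analogue) is the correct first-order condition, and the rest of the argument is the purely algebraic three-point identity, which needs no regularity beyond existence of $\nabla\R$ at the relevant points. Since this is a standard lemma quoted verbatim from \cite{beck2003mirror}, I would present the short derivation above and refer to that source for the handling of these regularity issues rather than belabor them.
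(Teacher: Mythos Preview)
Your argument is correct and is exactly the standard derivation: the first-order optimality condition for the constrained convex minimization combined with the Bregman three-point identity. Note, however, that the paper does not actually prove this lemma; it merely quotes it from Beck and Teboulle \cite{beck2003mirror} and invokes it as a known tool, so there is no ``paper's own proof'' to compare against --- your proposal supplies precisely the proof one would expect.
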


\subsection{Proof of Lemma \ref{meandeviation}}
\noindent
Applying Lemma \ref{beck} to the update \eqref{xhupdate}, we get
$$
\eta_t\inn{\hx_{i,t+1}-y_{i,t}, \nabla_{i,t}} \leq -\dr(y_{i,t}, \hx_{i,t+1})-\dr( \hx_{i,t+1},y_{i,t})
$$
In view of the strong convexity of $\R$, the Bregman divergence satisfies $\dr(x,y) \geq \frac{1}{2}\norm{x-y}^2$ for any $x,y \in \X$ (see \eqref{bregcond}). Therefore, we can simplify the equation above as follows
\begin{align}\label{eq1}
\eta_t\inn{y_{i,t}-\hx_{i,t+1}, \nabla_{i,t}} &\geq \dr(y_{i,t}, \hx_{i,t+1})+\dr( \hx_{i,t+1},y_{i,t})\notag\\
&\geq \norm{y_{i,t}- \hx_{i,t+1}}^2.
\end{align}
On the other hand, for any primal-dual norm pair it holds that 
\begin{align*}
\inn{y_{i,t}-\hx_{i,t+1}, \nabla_{i,t}}  &\leq  \norm{y_{i,t}- \hx_{i,t+1}}\norm{\nabla_{i,t}}_* \\
&\leq L\norm{y_{i,t}- \hx_{i,t+1}},
\end{align*}
using Assumption \ref{A1} in the last line. Combining above with \eqref{eq1}, we obtain 
\begin{align}\label{eq2}
\norm{y_{i,t}- \hx_{i,t+1}} \leq L\eta_t.
\end{align}
Letting $e_{i,t}:=\hx_{i,t+1}-y_{i,t}$, we can now rewrite update \eqref{xyupdate} as 
$$
\hx_{i,t+1}=\sum_{j=1}^n [W]_{ij} x_{j,t}+e_{i,t},
$$
which implies 
\begin{align}\label{eq3}
x_{i,t+1}=A\hx_{i,t+1}=\sum_{j=1}^n [W]_{ij} Ax_{j,t}+Ae_{i,t}.
\end{align}
Using Assumption \ref{A2} (doubly stochasticity of $W$), the above immediately yields 
\begin{align*}
\bx_{t+1}&:=\frac{1}{n}\sum_{i=1}^nx_{i,t+1}=\frac{1}{n}\sum_{i=1}^n\sum_{j=1}^n [W]_{ij} Ax_{j,t}+\frac{1}{n}\sum_{i=1}^nAe_{i,t}\\
&=\frac{1}{n}\sum_{j=1}^n \left(\sum_{i=1}^n[W]_{ij}\right) Ax_{j,t}+\frac{1}{n}\sum_{i=1}^nAe_{i,t}\\
\vphantom{\frac{1}{n}\sum_{j=1}^n \left(\sum_{i=1}^n[W]_{ij}\right) Ax_{j,t}+\frac{1}{n}\sum_{i=1}^nAe_{i,t}}&=A\bx_t+A\bar{e}_t,
\end{align*}
where $\bar{e}_t:=\frac{1}{n}\sum_{i=1}^ne_{i,t}$, and $\bx_t=\frac{1}{n}\sum_{i=1}^nx_{i,t}$ as defined in the statement of the lemma. As a result, 
\begin{align}\label{eq4}
\bx_{t+1}=\sum_{\tau=0}^tA^{t+1-\tau}\bar{e}_\tau.
\end{align}
On the other hand, stacking the local vectors $x_{i,t}$ and $e_{i,t}$ in \eqref{eq3} in the following form
\begin{align*}
x_t&:=[x_{1,t}^\top,x_{2,t}^\top,\ldots,x_{n,t}^\top]^\top\\
e_t&:=[e_{1,t}^\top,e_{2,t}^\top,\ldots,e_{n,t}^\top]^\top,
\end{align*}
and using $\otimes$ to denote the Kronecker product, we can write \eqref{eq3} in the matrix format as
\begin{align*}
\vphantom{\sum_{\tau=0}^t (W^{t-\tau}\otimes A^{t-\tau})(I_{n}\otimes A)e_{\tau}} x_{t+1}&=(W\otimes A)x_t+(I_{n}\otimes A)e_t\\
&=\sum_{\tau=0}^t (W\otimes A)^{t-\tau}(I_{n}\otimes A)e_{\tau}\\
&=\sum_{\tau=0}^t (W^{t-\tau}\otimes A^{t-\tau})(I_{n}\otimes A)e_{\tau}
\end{align*}
Therefore, using above, we have
$$
x_{i,t+1}=\sum_{\tau=0}^t\sum_{j=1}^n \left[W^{t-\tau}\right]_{ij}A^{t+1-\tau}e_{j,\tau}.
$$
Combining above with \eqref{eq4}, we derive
$$
x_{i,t+1}-\bx_{t+1}=\sum_{\tau=0}^t\sum_{j=1}^n\left(\left[W^{t-\tau}\right]_{ij}-\frac{1}{n}\right)A^{t+1-\tau}e_{j,\tau},
$$
which entails
\begin{align}\label{eq5}
\norm{x_{i,t+1}-\bx_{t+1}}\leq \sum_{\tau=0}^t\sum_{j=1}^n\left|\left[W^{t-\tau}\right]_{ij}-\frac{1}{n}\right|L\eta_\tau,
\end{align}
where we used $\norm{e_{i,\tau}}\leq L\eta_\tau$ obtained in \eqref{eq2} as well as the assumption $\norm{A}\leq 1$ (Assumption \ref{A5}). By standard properties of doubly stochastic matrices (see e.g. \cite{horn2012matrix}), the matrix W satisfies
$$
\sum_{j=1}^n\left|\left[W^{t}\right]_{ij}-\frac{1}{n}\right| \leq \sqrt{n}\sigma^t_2(W).
$$
Substituting above into \eqref{eq5} finishes the proof. \qed

\subsection{Proof of Lemma \ref{auxlemma}}
We start by adding, subtracting, and regrouping several terms as follows
\begin{align*}
\frac{1}{\eta_t}\dr(x^\star_t,&y_{i,t})-\frac{1}{\eta_t}\dr(x^\star_t,\hx_{i,t+1})=  \\
 +&\frac{1}{\eta_t}\dr(x^\star_t,y_{i,t})-\frac{1}{\eta_{t+1}}\dr(x^\star_{t+1},y_{i,t+1})\\
+&\frac{1}{\eta_{t+1}}\dr(x^\star_{t+1},y_{i,t+1})-\frac{1}{\eta_{t+1}}\dr(Ax^\star_{t},y_{i,t+1})\\
+&\frac{1}{\eta_{t+1}}\dr(Ax^\star_{t},y_{i,t+1})-\frac{1}{\eta_{t+1}}\dr(x^\star_{t},\hx_{i,t+1})\\
+&\frac{1}{\eta_{t+1}}\dr(x^\star_{t},\hx_{i,t+1})-\frac{1}{\eta_t}\dr(x^\star_{t},\hx_{i,t+1}) \numberthis\label{eq19}.
\end{align*}
We now need to bound each of the four terms above. For the second term, we note that
\begin{align}\label{eq10}
\dr(x^\star_{t+1},y_{i,t+1})-\dr(Ax^\star_{t},y_{i,t+1}) \leq K\norm{x^\star_{t+1}-Ax^\star_{t}},
\end{align}
by the Lipschitz condition on the Bregman divergence (Assumption \ref{A4}). Also, by the separate convexity of Bregman divergence (Assumption \ref{A3}) as well as stochasticity of $W$ (Assumption \ref{A2}), we have
\begin{align*}
\sum_{i=1}^n&\dr(Ax^\star_{t},y_{i,t+1})-\sum_{i=1}^n\dr(x^\star_{t},\hx_{i,t+1}) \\
&=\sum_{i=1}^n\dr(Ax^\star_{t},\sum_{j=1}^n[W]_{ij}x_{j,t+1})-\sum_{i=1}^n\dr(x^\star_{t},\hx_{i,t+1})\\
&\leq \sum_{i=1}^n\sum_{j=1}^n[W]_{ij}\dr(Ax^\star_{t},x_{j,t+1})-\sum_{i=1}^n\dr(x^\star_{t},\hx_{i,t+1})\\
&= \sum_{j=1}^n\dr(Ax^\star_{t},x_{j,t+1})\sum_{i=1}^n[W]_{ij}-\sum_{i=1}^n\dr(x^\star_{t},\hx_{i,t+1})\\
&= \sum_{j=1}^n\dr(Ax^\star_{t},x_{j,t+1})-\sum_{i=1}^n\dr(x^\star_{t},\hx_{i,t+1})\\
&= \sum_{i=1}^n\dr(Ax^\star_{t},A\hx_{i,t+1})-\sum_{i=1}^n\dr(x^\star_{t},\hx_{i,t+1})\leq 0, \numberthis \label{eq11}
\end{align*}
where the last inequality follows from the fact that $A$ is non-expansive (Assumption \ref{A5}). When summing \eqref{eq19} over $t\in [T]$ the first term telescopes, while the second and third terms are handled with the bounds in \eqref{eq10} and \eqref{eq11}, respectively. Recalling from the statement of the lemma that $R^2=\sup_{x,y\in \X} \dr(x, y)$, we obtain 
\begin{align*}
\frac{1}{n}\sum_{i=1}^n\sum_{t=1}^T\bigg(\frac{1}{\eta_t}&\dr(x^\star_t,y_{i,t})-\frac{1}{\eta_t}\dr(x^\star_t,\hx_{i,t+1})\bigg) \\
&\leq \frac{R^2}{\eta_1}+\sum_{t=1}^T\frac{K}{\eta_{t+1}}\norm{x^\star_{t+1}-Ax^\star_{t}}\\
&+R^2\sum_{t=1}^T\frac{1}{\eta_{t+1}}-\frac{1}{\eta_{t}}\\
&\leq \frac{2R^2}{\eta_{T+1}}+\sum_{t=1}^T\frac{K}{\eta_{t+1}}\norm{x^\star_{t+1}-Ax^\star_{t}},
\end{align*}
where we used the fact that the step-size is positive and decreasing in the last line.  \qed

\subsection{An Auxiliary Lemma}
In the proof of Theorem \ref{theorem1}, we make use of another technical lemma provided below.
\begin{lemma}\label{functiondeviation}
Let $\X$ be a convex set in a Banach space $\B$, $\R : \B \rightarrow \real$ denote a 1-strongly convex function on $\X$ with respect to a norm $\|\cdot\|$, and $\dr(\cdot,\cdot)$ represent the Bregman divergence with respect to $\R$, respectively. Furthermore, assume that the local functions are Lipschitz continuous (Assumption \ref{A1}), the matrix $W$ is doubly stochastic (Assumption \ref{A2}), the Bregman divergence satisfies the Lipschitz condition and the separate convexity (Assumptions \ref{A3}-\ref{A4}), and the mapping $A$ is non-expansive (Assumption \ref{A5}). Then, for the local estimates $\{x_{i,t}\}_{t=1}^T$ generated by the updates \eqref{xhupdate}-\eqref{xyupdate}, it holds that
\begin{align*}
\frac{1}{n}\sum_{i=1}^n\sum_{t=1}^T&\Big(f_{i,t}(x_{i,t})-f_{i,t}(x^\star_t)\Big)\leq \\
&\frac{2R^2}{\eta_{T+1}}+\sum_{t=1}^T\frac{K}{\eta_{t+1}}\norm{x^\star_{t+1}-Ax^\star_{t}}+L^2\sum_{t=1}^T\frac{\eta_t}{2}\\
&+2L^2\sqrt{n}\sum_{t=1}^T\sum_{\tau=0}^{t-1}\eta_\tau\sigma^{t-\tau-1}_2(W),
\end{align*}
where $R^2:=\sup_{x,y\in \X} \dr(x, y).$
\end{lemma}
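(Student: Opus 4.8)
The plan is to decompose the per-agent suboptimality gap $f_{i,t}(x_{i,t})-f_{i,t}(x^\star_t)$ into a term that can be bounded by the network error (Lemma \ref{meandeviation}) and a term that is controlled via the mirror-descent machinery and the tracking error (Lemma \ref{auxlemma}). First I would introduce the network-averaged iterate $\bar{x}_t$ and write, using convexity and the gradient bound from Assumption \ref{A1},
\begin{align*}
f_{i,t}(x_{i,t})-f_{i,t}(x^\star_t) &= \big(f_{i,t}(x_{i,t})-f_{i,t}(\bar{x}_t)\big)+\big(f_{i,t}(\bar{x}_t)-f_{i,t}(x^\star_t)\big)\\
&\leq L\norm{x_{i,t}-\bar{x}_t}+\inn{\nabla f_{i,t}(\bar{x}_t),\bar{x}_t-x^\star_t}.
\end{align*}
Actually, to keep the gradients aligned with the algorithm's updates (which use $\nabla_{i,t}=\nabla f_{i,t}(x_{i,t})$), I would instead expand $f_{i,t}(x_{i,t})-f_{i,t}(x^\star_t)\leq \inn{\nabla_{i,t},x_{i,t}-x^\star_t}$ directly by convexity, then split $x_{i,t}-x^\star_t = (x_{i,t}-y_{i,t}) + (y_{i,t}-x^\star_t)$ — no, cleaner: split into $(x_{i,t}-\bar{x}_t)+(\bar{x}_t-x^\star_t)$ and bound the first inner product by $L\norm{x_{i,t}-\bar{x}_t}$, leaving $\inn{\nabla_{i,t},\bar{x}_t-x^\star_t}$.

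For the summable network piece, I would invoke Lemma \ref{meandeviation} to get $\norm{x_{i,t}-\bar{x}_t}\leq L\sqrt{n}\sum_{\tau=0}^{t-1}\eta_\tau\sigma_2^{t-\tau-1}(W)$, so that $\frac{1}{n}\sum_i\sum_t L\norm{x_{i,t}-\bar{x}_t}$ contributes a term of the form $L^2\sqrt{n}\sum_t\sum_{\tau=0}^{t-1}\eta_\tau\sigma_2^{t-\tau-1}(W)$. (The factor $2$ in the stated bound presumably comes from also needing $\norm{y_{i,t}-\bar{x}_t}$ or a second copy of this deviation elsewhere in the argument.) For the remaining term $\frac{1}{n}\sum_i\sum_t\inn{\nabla_{i,t},\bar{x}_t-x^\star_t}$, I would further decompose $\bar{x}_t-x^\star_t$ using the algorithm updates: write $\inn{\nabla_{i,t},x_{i,t}-x^\star_t}$ and apply Lemma \ref{beck} (Beck–Teboulle) to the update \eqref{xhupdate} with $a=\eta_t\nabla_{i,t}$, $c=y_{i,t}$, $d=x^\star_t$, which yields
\begin{align*}
\eta_t\inn{\hx_{i,t+1}-x^\star_t,\nabla_{i,t}} \leq \dr(x^\star_t,y_{i,t})-\dr(x^\star_t,\hx_{i,t+1})-\dr(\hx_{i,t+1},y_{i,t}).
\end{align*}
I would then rewrite $\inn{x_{i,t}-x^\star_t,\nabla_{i,t}} = \inn{x_{i,t}-\hx_{i,t+1},\nabla_{i,t}} + \inn{\hx_{i,t+1}-x^\star_t,\nabla_{i,t}}$; the first piece is bounded by $L\norm{x_{i,t}-\hx_{i,t+1}}$ — hmm, this requires relating $x_{i,t}$ and $\hx_{i,t+1}$, which is not as direct as $\norm{y_{i,t}-\hx_{i,t+1}}\leq L\eta_t$ from \eqref{eq2}. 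A cleaner route: bound $\inn{y_{i,t}-\hx_{i,t+1},\nabla_{i,t}}\leq L^2\eta_t$ (giving rise to the $L^2\sum_t\eta_t/2$ term after using the strong-convexity lower bound on $\dr(\hx_{i,t+1},y_{i,t})$), and handle the difference between summing against $x_{i,t}$ versus against $y_{i,t}$ or $\bar{x}_t$ by folding it into the network-error term. After dividing by $\eta_t$, summing over $i$ and $t$, and applying Lemma \ref{auxlemma} to the telescoping-like sum $\frac{1}{n}\sum_i\sum_t\frac{1}{\eta_t}\big(\dr(x^\star_t,y_{i,t})-\dr(x^\star_t,\hx_{i,t+1})\big)$, I obtain the $\frac{2R^2}{\eta_{T+1}}+\sum_t\frac{K}{\eta_{t+1}}\norm{x^\star_{t+1}-Ax^\star_t}$ terms, and combining everything gives the claimed bound.

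The main obstacle is the bookkeeping around which iterate the gradient inner products are expanded against: the algorithm produces $\hx_{i,t+1}$ from $y_{i,t}$, but the regret is phrased in terms of $x_{i,t}$, and the averaged quantity $\bar{x}_t$ enters through Lemma \ref{auxlemma}'s consensus step. Matching these up — deciding precisely where to insert $\pm\bar{x}_t$, $\pm y_{i,t}$, $\pm\hx_{i,t+1}$ so that each leftover difference is either $\mathcal{O}(L\eta_t)$ (contributing to the $L^2\sum\eta_t/2$ term) or a consensus-deviation term controlled by Lemma \ref{meandeviation} (contributing to $E_{\mathtt{Net}}$), while the "good" Bregman differences line up exactly with the left-hand side of Lemma \ref{auxlemma} — is the delicate part and explains the appearance of the constant $2$ in front of the network-error sum. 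Everything else is a routine application of Cauchy–Schwarz, the dual-norm bound from Assumption \ref{A1}, and the strong-convexity inequality \eqref{bregcond}.
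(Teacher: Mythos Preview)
Your proposal is correct and, once you settle on the ``cleaner route,'' it is exactly the paper's proof: start from convexity $f_{i,t}(x_{i,t})-f_{i,t}(x^\star_t)\leq\inn{\nabla_{i,t},x_{i,t}-x^\star_t}$, split the right-hand side as $\inn{\nabla_{i,t},\hx_{i,t+1}-x^\star_t}+\inn{\nabla_{i,t},x_{i,t}-y_{i,t}}+\inn{\nabla_{i,t},y_{i,t}-\hx_{i,t+1}}$, handle the first term with Lemma~\ref{beck} (whose $-\dr(\hx_{i,t+1},y_{i,t})$ term, via strong convexity, cancels half of the AM--GM bound on the third term to leave $\eta_tL^2/2$), and bound the middle term by writing $x_{i,t}-y_{i,t}=(x_{i,t}-\bar x_t)+\sum_j[W]_{ij}(\bar x_t-x_{j,t})$ and applying Lemma~\ref{meandeviation} twice, which is precisely where the coefficient $2$ on the network term comes from. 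Summing and invoking Lemma~\ref{auxlemma} finishes it; your initial detour through $\bar x_t$ in the first decomposition is unnecessary but harmless.
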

\begin{proof}
In view of the convexity of $f_{i,t}(\cdot)$, we have
\begin{align*}
f_{i,t}(x_{i,t})-f_{i,t}(x^\star_t)& \leq \inn{\nabla_{i,t}, x_{i,t}-x^\star_t}\\
&=\inn{\nabla_{i,t}, \hx_{i,t+1}-x^\star_t}+\inn{\nabla_{i,t}, x_{i,t}-y_{i,t}}\\
&+\inn{\nabla_{i,t}, y_{i,t}-\hx_{i,t+1}} \numberthis \label{eq6}
\end{align*}
for any $i\in [n]$. We now need to bound each of the three terms on the right hand side of \eqref{eq6}. Starting with the last term and using boundedness of gradients (Assumption \ref{A1}), we have that
\begin{align*}
\vphantom{\frac{1}{2\eta_t}\norm{y_{i,t}-\hx_{i,t+1}}^2+\frac{\eta_t}{2}L^2}   \inn{\nabla_{i,t}, y_{i,t}-\hx_{i,t+1}} &\leq \norm{y_{i,t}-\hx_{i,t+1}}\norm{\nabla_{i,t}}_*\\
\vphantom{\frac{1}{2\eta_t}\norm{y_{i,t}-\hx_{i,t+1}}^2+\frac{\eta_t}{2}L^2} &\leq L\norm{y_{i,t}-\hx_{i,t+1}}\\
\vphantom{\frac{1}{2\eta_t}\norm{y_{i,t}-\hx_{i,t+1}}^2+\frac{\eta_t}{2}L^2} &\leq \frac{1}{2\eta_t}\norm{y_{i,t}-\hx_{i,t+1}}^2+\frac{\eta_t}{2}L^2, \numberthis \label{eq7}
\end{align*}
where the last line is due to AM-GM inequality. Next, we recall update \eqref{xyupdate} to bound the second term in \eqref{eq6} using Assumption \ref{A1} and \ref{A2} as 
\begin{align*}
\vphantom{2G^2\sqrt{n}\sum_{\tau=0}^t\sigma^{t-\tau}_2(W)\eta_\tau} &\inn{\nabla_{i,t}, x_{i,t}-y_{i,t}}=\inn{\nabla_{i,t}, x_{i,t}-\bx_{t} + \bx_{t}-y_{i,t}}\\
\vphantom{2G^2\sqrt{n}\sum_{\tau=0}^t\sigma^{t-\tau}_2(W)\eta_\tau} &~~~~~~~~~~~~~~=\inn{\nabla_{i,t}, x_{i,t}-\bx_{t}}+\sum_{j=1}^n[W]_{ij}\inn{\nabla_{i,t},\bx_{t}-x_{j,t}}\\
\vphantom{2G^2\sqrt{n}\sum_{\tau=0}^t\sigma^{t-\tau}_2(W)\eta_\tau} &~~~~~~~~~~~~~~\leq L\norm{x_{i,t}-\bx_{t}}+L\sum_{j=1}^n[W]_{ij}\norm{x_{j,t}-\bx_{t}}\\
\vphantom{2G^2\sqrt{n}\sum_{\tau=0}^t\sigma^{t-\tau}_2(W)\eta_\tau} &~~~~~~~~~~~~~~\leq 2L^2\sqrt{n}\sum_{\tau=0}^{t-1}\eta_\tau\sigma^{t-\tau-1}_2(W), \numberthis \label{eq8}
\end{align*}
where in the last line we appealed to Lemma \ref{meandeviation}. Finally, we apply Lemma \ref{beck} to \eqref{eq6} to get
\begin{align*}
\inn{\nabla_{i,t}, \hx_{i,t+1}-x^\star_t}& \leq \frac{1}{\eta_t}\dr(x^\star_t,y_{i,t})-\frac{1}{\eta_t}\dr(x^\star_t,\hx_{i,t+1})\\
&-\frac{1}{\eta_t}\dr(\hx_{i,t+1},y_{i,t})\\
& \leq \frac{1}{\eta_t}\dr(x^\star_t,y_{i,t})-\frac{1}{\eta_t}\dr(x^\star_t,\hx_{i,t+1})\\
&-\frac{1}{2\eta_t}\norm{\hx_{i,t+1}-y_{i,t}}^2, \numberthis \label{eq9}
\end{align*}
since the Bregman divergence satisfies $\dr(x,y) \geq \frac{1}{2}\norm{x-y}^2$ for any $x,y \in \X$. Substituting \eqref{eq7}, \eqref{eq8}, and \eqref{eq9} into the bound \eqref{eq6}, we derive 
\begin{align*}
f_{i,t}(x_{i,t})-f_{i,t}(x^\star_t)&\leq \frac{\eta_t}{2}L^2+2L^2\sqrt{n}\sum_{\tau=0}^{t-1}\eta_\tau\sigma^{t-\tau-1}_2(W)\\
&+\frac{1}{\eta_t}\dr(x^\star_t,y_{i,t})-\frac{1}{\eta_t}\dr(x^\star_t,\hx_{i,t+1}). \numberthis \label{eq14}
\end{align*}
Summing over $t\in [T]$ and $i\in [n]$, and applying Lemma \ref{auxlemma} completes the proof. \end{proof}

\subsection{Proof of Theorem \ref{theorem1}}
To bound the regret defined in \eqref{regret}, we start with
\begin{align*}
\vphantom{\frac{1}{n}\sum_{i=1}^n}f_t(x_{i,t}) - f_t(x^\star_t)&=f_t(x_{i,t})-f_t(\bx_t)+f_t(\bx_t) - f_t(x^\star_t)\\
\vphantom{\frac{1}{n}\sum_{i=1}^n}&\leq L\norm{x_{i,t}-\bx_t}+f_t(\bx_t)-f_t(x^\star_t)\\
\vphantom{\frac{1}{n}\sum_{i=1}^n}&= \frac{1}{n}\sum_{i=1}^nf_{i,t}(\bx_t)-\frac{1}{n}\sum_{i=1}^nf_{i,t}(x^\star_t)\\
\vphantom{\frac{1}{n}\sum_{i=1}^n}&+L\norm{x_{i,t}-\bx_t},
\end{align*}
where we used the Lipschitz continuity of $f_t(\cdot)$ (Assumption \ref{A1}) in the second line.
Using the Lipschitz continuity of $f_{i,t}(\cdot)$ for $i\in [n]$, we simplify above as follows  
\begin{align*}
f_t(x_{i,t}) - f_t(x^\star_t) &\leq \frac{1}{n}\sum_{i=1}^nf_{i,t}(x_{i,t})-\frac{1}{n}\sum_{i=1}^nf_{i,t}(x^\star_t)\\
\vphantom{\frac{1}{n}\sum_{i=1}^n}&+L\norm{x_{i,t}-\bx_t}+\frac{L}{n}\sum_{i=1}^n\norm{x_{i,t}-\bx_t}. \numberthis \label{eq11}
\end{align*}
Summing over $t\in [T]$ and $i\in [n]$, and applying Lemmata \ref{meandeviation} and \ref{functiondeviation} completes the proof. \qed

\subsection{Proof of Theorem \ref{theorem2}}
We need to rework the proof of Theorem \ref{theorem1} using stochastic gradients by tracking the changes. Following the lines in the proof of Lemma \ref{meandeviation}, equation \eqref{eq2} will be changed to 
$$
\norm{\y_{i,t}- \xhx_{i,t+1}} \leq \eta_t\norm{{\boldsymbol \nabla}_{i,t}}_*,
$$
yielding 
\begin{align}\label{eq12}
\norm{\x_{i,t+1}-\xbx_{t+1}}\leq \sqrt{n}\sum_{\tau=0}^t\eta_\tau\norm{{\boldsymbol \nabla}_{i,\tau}}_*\sigma^{t-\tau}_2(W).
\end{align}
On the other hand, at the beginning of Lemma \ref{functiondeviation}, we should use the stochastic gradient as
\begin{align*}
f_{i,t}(\x_{i,t})-&f_{i,t}(x^\star_t) \leq \inn{\nabla_{i,t}, \x_{i,t}-x^\star_t}\\
&= \inn{{\boldsymbol \nabla}_{i,t}, \x_{i,t}-x^\star_t}+\inn{\nabla_{i,t}-{\boldsymbol \nabla}_{i,t}, \x_{i,t}-x^\star_t}\\
&=\inn{{\boldsymbol \nabla}_{i,t}, \xhx_{i,t+1}-x^\star_t}+\inn{{\boldsymbol \nabla}_{i,t}, \x_{i,t}-\y_{i,t}}\\
&+\inn{{\boldsymbol \nabla}_{i,t}, \y_{i,t}-\xhx_{i,t+1}}+\inn{\nabla_{i,t}-{\boldsymbol \nabla}_{i,t}, \x_{i,t}-x^\star_t} 
\end{align*}
Moreover, as in Lemma \ref{meandeviation}, any bound involving $L$ which was originally an upper bound on the exact gradient must be replaced by the norm of stochastic gradient, which changes inequality \eqref{eq14} to 
\begin{align*}
&f_{i,t}(\x_{i,t})-f_{i,t}(x^\star_t)\leq \\
&~~~~~~~~~~~\frac{\eta_t}{2}\norm{{\boldsymbol \nabla}_{i,t}^2}_*+2\sqrt{n}\sum_{\tau=0}^{t-1}\eta_\tau\norm{{\boldsymbol \nabla}_{i,\tau}^2}_*\sigma^{t-\tau-1}_2(W)\\
&~~~~~~~~~~~+\frac{1}{\eta_t}\dr(x^\star_t,\y_{i,t})-\frac{1}{\eta_t}\dr(x^\star_t,\xhx_{i,t+1})\\
&~~~~~~~~~~~+\inn{\nabla_{i,t}-{\boldsymbol \nabla}_{i,t}, \x_{i,t}-x^\star_t}
\end{align*}
Then taking expectation from above, since
\begin{align*}
&\e{\inn{\nabla_{i,t}-{\boldsymbol \nabla}_{i,t}, \x_{i,t}-x^\star_t}}\\
&~~~~~~~~~~~=\e{\e{\inn{\nabla_{i,t}-{\boldsymbol \nabla}_{i,t}, \x_{i,t}-x^\star_t} \big\vert \F_{t-1}}}\\
&~~~~~~~~~~~=\e{\inn{\e{\nabla_{i,t}-{\boldsymbol \nabla}_{i,t}\big\vert \F_{t-1}}, \x_{i,t}-x^\star_t}}\\
&~~~~~~~~~~~=0, 
\end{align*}
using condition \eqref{condition}, we get 
\begin{align*}
&\e{f_{i,t}(\x_{i,t})}-f_{i,t}(x^\star_t)\leq\\
&~~~~~~~\frac{\eta_t}{2}\e{\norm{{\boldsymbol \nabla}_{i,t}}_*^2}+2\sqrt{n}\sum_{\tau=0}^{t-1}\eta_\tau\e{\norm{{\boldsymbol \nabla}_{i,\tau}}_*^2}\sigma^{t-\tau-1}_2(W)\\
&~~~~~~~+\e{\frac{1}{\eta_t}\dr(x^\star_t,\y_{i,t})-\frac{1}{\eta_t}\dr(x^\star_t,\xhx_{i,t+1})}
\end{align*}
Summing over $i \in [n]$ and $t\in [T]$, we apply bounded second moment condition \eqref{condition} and Lemma \ref{auxlemma} to get the same as bound as Lemma \ref{functiondeviation}, except for $L$ being replaced by $G$. Then the proof is finished once we return to \eqref{eq11}. \qed

\bibliographystyle{IEEEtran}
\bibliography{IEEEabrv,references}

\end{document}